\newcommand{\thickhline}{%
	\noalign {\ifnum 0=`}\fi \hrule height 1pt
	\futurelet \reserved@a \@xhline
}
\newcolumntype{"}{@{\hskip\tabcolsep\vrule width 1pt\hskip\tabcolsep}}
\newtheorem{theorem}{Theorem}[section]
\newtheorem{lemma}[theorem]{Lemma}
\theoremstyle{definition}
\newtheorem{definition}[theorem]{Definition}
\newtheorem{corollary}[theorem]{Corollary}
\newtheorem{example}[theorem]{Example}
\theoremstyle{remark}
\newtheorem{remark}[theorem]{Remark}
\numberwithin{equation}{section}
\begin{document}
	\title{On the Hilbert Series of the tangent cones for some 4-generated pseudo symmetric monomial curves}
	\author{N\.{i}l \c{S}ah\.{i}n}
	\address{Department of Industrial Engineering, Bilkent University, Ankara, 06800 Turkey}
	\email{nilsahin@bilkent.edu.tr}

	\thanks{}
	
	\subjclass[2010]{Primary 13H10, 14H20; Secondary 13P10}
	\keywords{Hilbert function, tangent cone, monomial
		curve, numerical semigroup, standard bases}
	
	\date{\today}
	
	\commby{}
	
	\dedicatory{}

\begin{abstract} In this article, we study Hilbert Series of  non-Cohen-Maculay tangent cones for some 4-generated pseudo symmetric monomial curves. We show that the Hilbert Function is nondecreasing by explicitly computing it. We also compute standard bases of these toric ideals.
	
\keywords{Hilbert function, tangent cone, monomial curve, numerical semigroup, standard bases}
\end{abstract}

\maketitle
\section{Introduction}
\label{Sec:1}
Hilbert function of the tangent cone of a projective variety carries  a lot of information about the discrete invariants of the variety and of its embedding such as  dimension, the degree and the arithmetic genus of the variety. Though the Hilbert function of a Cohen-Macaulay graded ring is well known with the help of Macaulay's theorem, the same thing do not hold for the Hilbert Function of its tangent cone, \cite{Elias}.  Some partial results obtained in  special cases about the growth of the Hilbert Function of the tangent cone see \cite{am,AMS,AOS,mz,Oneto,Puthen}.

In this paper we will work with one dimensional Cohen-Macaulay local rings. It is known that when the tangent cone is also Cohen-Macaulay,  the Hilbert Function of the local ring is non-decreasing. Rossi's conjecture states that  ``The Hilbert function of a Gorenstein Local ring of dimension one is non-decreasing''. This conjecture is still open in embedding dimension 4 even for monomial curves. Nondecreasingness of the Hilbert Function of the tangent cone for some 4 generated symmetric monomial curves is studied by Arslan and Mete in \cite{am}  and Katsabekis in \cite{an}.  Arslan and Mete put an extra condition on the genarators, namely $\alpha_2\leq \alpha_{21}+\alpha_{24}$ and showed that the tangent cone is Cohen Macaulay and, as a result, they showed that Hilbert Function is non-decreasing without the need of explicit Hilbert Function computation. Katsabekis also studied Cohen-Macaulay Tangent cone case and computed Hilbert Function explicitly. For 4 generated symmetric monomial curves, the case   $\alpha_2> \alpha_{21}+\alpha_{24}$ ( ie. not Cohen-Macaulay Tangent cone) is still open. We studied 4-generated pseudo-symmetric monomial curves with Cohen Macaulay tangent cones ($\alpha_2\leq \alpha_{21}+1$) in \cite{SahinSahin} and showed non-decreasingness of the Hilbert Function.  Without Cohen-Macaulayness of the tangent cone ($\alpha_2>\alpha_{21}+1$), non-decreasingness of the Hilbert Function of the local ring is not guaranteed and hence requires an explicit Hilbert Function computation. With an observation that the number of generators in the standard basis increase when $\alpha_4$ increse, we studied the simplest case $\alpha_4=2$ in \cite{Sahin} . We  showed that the number of elements in the standard basis depends on a parameter k and the Hilbert function is non-decreasing when this parameter $k$ is $1$. 
In this paper, we focus on the next case, $\alpha_4=3$.  As the standard basis computation requires adding the normal forms of s-polynomials of all the elements in the ideal, it is important to see the polynomials to start with. Understanding this next case a step towards understanding the general case using induction.
This case differs from the case $\alpha_4=2$ as can be seen in the next two examples. Though the parameter $k=2$ in both of these examples, the number of elements and their forms are much different. The following examples are done using SINGULAR. \footnote{Singular 2.0. A Computer Algebra System for Polynomial Computations. Available at http://www.singular.uni-kl.de.}
\begin{example}
	For $\alpha_{21}=4$, $\alpha_{1}=22$,$\alpha_{2}=13$, $\alpha_{3}=5$, $\alpha_{4}=2$, we have $k=2$ and the corresponding standard basis is $\{ X_1^{22}-X_3X_4
	,X_2^{13}-X_1^4X_4
	,X_3^5-X_1^{17}X_2
	,X_4^2-X_1X_2^{12}X_3^4
	,X_1^5X_3^4-X_2X_4
	,X_1^{26}-X_2^{13}X_3
	,X_1^9X_3^4-X_2^{14}
	,X_1^{35}X_3^3-X_2^{27}  \}$.

\end{example}
\begin{example}
		For $\alpha_{21}=101$, $\alpha_{1}=501$,$\alpha_{2}=340$, $\alpha_{3}=18$, $\alpha_{4}=3$, we have $k=2$ and the corresponding standard basis is
$\{X_2X_4^2-X_1^{102}X_3^{17}, X_3X_4^2-X_1^{501},                  X_4^3-X_1X_2^{339}X_3^{17}  ,X_3^{18}-X_1^{399}X_2 ,X_1^{101}X_4-X_2^{340},X_1^{203}X_3^{17}-X_2^{341}X_4  ,X_2^{340}X_3X_4-X_1^{602} , X_2^{680}X_3-X_1^{703},  X_1^{906}X_3^{16}-X_2^{1021}X_4, X_1^{1609}X_3^{15}-X_2^{1701}X_4, \newline X_1^{2312}X_3^{14}-X_2^{2381}X_4, X_1^{3015}X_3^{13}-X_2^{3061}X_4  ,X_1^{3718}X_3^{12}-X_2^{3741}X_4 ,X_2^{4421}X_4-X_1^{4421}X_3^{11} ,  X_1^{4522}X_3^{11}-X_2^{4761}, \newline X_1^{5225}X_3^{10}-X_2^{5441},  X_1^{5928}X_3^9-X_2^{6121},    X_1^{6631}X_3^8-X_2^{6801}, X_1^{7334}X_3^7-X_2^{7481},   X_1^{8037}X_3^6-X_2^{8161}, X_1^{8740}X_3^5-X_2^{8841}, X_1^{9443}X_3^4-X_2^{9521} , X_1^{10146}X_3^3-X_2^{10201}, X_1^{10849}X_3^2-X_2^{10881}    , X_1^{11552}X_3-X_2^{11561} , X_2^{12241}-X_1^{12255}  \}$

\end{example}
This shows that the standard basis do not only depend on $k$ like in $\alpha_4=2$ case.
\section{Basic Definitions}
Using the same notation with \cite{Sahin},  Let $S$ be the numerical semigroup $S=\langle n_1,\dots,n_k \rangle=\{ \displaystyle\sum_{i=1}^{k} u_in_i | u_i \in \mathbb{N}\}$, where $n_1< n_2<\dots<n_k$  are positive integers with $\gcd (n_1,\dots,n_k)=1$ . Let $K[S]=K[t^{n_1}, t^{n_2}, \dots, t^{n_k}]$ be the semigroup ring of $S$, where $K$ is algebraically closed field,  and $A=K[X_1,X_2,\dots,X_k]$. If  $\phi: A {\longrightarrow} K[S]$ with $\phi(X_i)=t^{n_i}$ and $\ker \phi=I_S$ , then $K[S]\simeq A/I_S$. $C_S$ is the affine curve with parametrization 
$$X_1=t^{n_1},\ \ X_2=t^{n_2},\ \dots,\  X_k=t^{n_k}  $$
corresponding to $S$ , and $I_S$ is the defining ideal of $C_S$.  $n_1$ is  the \textit{multiplicity} of $C_S$. $R_S=K[[t^{n_1},\dots,t^{n_k}]]$  is the local ring with the maximal ideal  $\mathfrak{m}=\langle t^{n_1},\hdots,t^{n_k}\rangle$. Then
$gr_{\mathfrak{m}}(R_S)=\bigoplus_{i=0}^{\infty} \mathfrak{m}^i/\mathfrak{m}^{i+1}\cong A/{I^*_S},$ is the associated graded ring 
where ${I^*_S}=\langle f^*| f \in I_S \rangle$ with $f^*$ denoting the least homogeneous summand of $f$. 

We mean the Hilbert function of the associated
graded ring $gr_{\mathfrak{m}}(R_S)=\bigoplus_{i=0}^{\infty} \mathfrak{m}^i/\mathfrak{m}^{i+1}$ by the  Hilbert function $H_{R_S}(n)$ of the local ring $R_S$ .
That is,
$$H_{R_S}(n)=H_{gr_{\mathfrak{m}}(R_S)}(n)=dim_{R_S/\mathfrak{m}}(\mathfrak{m}^n/\mathfrak{m}^{n+1}) \; \; n\geq
0.$$

The Hilbert series of $R_S$ is defined to be the generating function
$$HS_{R_S}(t)=\begin{displaystyle}\sum_{n \in \mathbb{N}}\end{displaystyle}H_{R_S}(n)t^n.$$
By the Hilbert-Serre theorem it can also be written as:
$HS_{R_S}(t)=\frac{P(t)}{(1-t)^k}=\frac{Q(t)}{(1-t)^d}$, where $P(t)$ and $Q(t)$ are polynomials with coefficients in
	$\mathbb{Z}$ and $d$ is the Krull dimension of $R_S$. $P(t)$ is called first Hilbert Series and $Q(t)$ is called second Hilbert series, \cite{greuel-pfister,Rossi}. It is also known that there is
a polynomial $P_{R_S}(n) \in \mathbb{Q}[n]$ called the Hilbert polynomial of $R_S$ such that
$H_{R_S}(n)=P_{R_S}(n)$ for all $n \geq n_0$, for some $n_0 \in \mathbb{N}$. The smallest $n_0$ satisfying this
condition is the regularity index of the Hilbert function of $R_S$.

A $4$-generated semigroup $S=\langle n_1,n_2,n_3,n_4 \rangle$ is pseudo-symmetric if and only if there are integers $\alpha_i>1$, for
$1\le i\le4$, and $\alpha_{21}>0$ with $0<\alpha_{21}<\alpha_1-1$,
such that 
\begin{eqnarray*}
	n_1&=&\alpha_2\alpha_3(\alpha_4-1)+1,\\
	n_2&=&\alpha_{21}\alpha_3\alpha_4+(\alpha_1-\alpha_{21}-1)(\alpha_3-1)+\alpha_3,\\
	n_3&=&\alpha_1\alpha_4+(\alpha_1-\alpha_{21}-1)(\alpha_2-1)(\alpha_4-1)-\alpha_4+1,\\
	n_4&=&\alpha_1\alpha_2(\alpha_3-1)+\alpha_{21}(\alpha_2-1)+\alpha_2. 
\end{eqnarray*}
Then, the toric ideal is $I_S=\langle f_1,f_2,f_3,f_4,f_5 \rangle$ with
\begin{eqnarray*} f_1&=&X_1^{\alpha_1}-X_3X_4^{\alpha_4-1},  \quad \quad
	f_2=X_2^{\alpha_2}-X_1^{\alpha_{21}}X_4, \quad
	f_3=X_3^{\alpha_3}-X_1^{\alpha_1-\alpha_{21}-1}X_2,\\
	f_4&=&X_4^{\alpha_4}-X_1X_2^{\alpha_2-1}X_3^{\alpha_3-1}, \quad 
	f_5=X_1^{\alpha_{21}+1}X_3^{\alpha_3-1}-X_2X_4^{\alpha_4-1}.
\end{eqnarray*} 
See \cite{komeda} for the details.

We focus on the case $\alpha_4=3.$
\section{Standard bases}\label{2}
\begin{remark}\label{f7}
	If $n_1<n_2$ then $2\alpha_2+1<2\alpha_{21}+\alpha_1$
\end{remark}
\begin{proof}
	
	$$	\begin{array}{llll}
	&n_1&<&n_2\\
	\implies	&2\alpha_2\alpha_3+1&<&3\alpha_{21}\alpha_3+(\alpha_1-\alpha_{21}-1)(\alpha_3-1)+\alpha_3\\
	\implies&2\alpha_2\alpha_3+1&<&2\alpha_{21}\alpha_3+\alpha_1\alpha_3-\alpha_1+\alpha_{21}+1\\
	\implies&\alpha_3(2\alpha_2-2\alpha_{21}-\alpha_1)&<&\alpha_{21}-\alpha_1 \\
	\implies&\alpha_3(2\alpha_2-2\alpha_{21}-\alpha_1+1)&<&\alpha_3+\alpha_{21}-\alpha_1<0 \ \   {\rm by} (2) \\
	\implies&2\alpha_2-2\alpha_{21}-\alpha_1+1&<&0 \\
	\implies&2\alpha_2+1&<&2\alpha_{21}+\alpha_1 \\
	\end{array}$$
	
\end{proof}
If $n_1<n_2<n_3< n_4$ then it is known from \cite{SahinSahin} that 
\begin{enumerate}
	\item[(1)] $\alpha_1>\alpha_4$
	\item[(2)] $\alpha_3<\alpha_1-\alpha_{21}$
	\item[(3)] $\alpha_4<\alpha_2+\alpha_3-1$
\end{enumerate}
and these conditions completely determine the leading monomials of $f_1, f_3$ and $f_4$. Indeed, $ {\rm LM}( f_1)= X_3X_4^{\alpha_{4}-1}$ by $(1)$, ${\rm LM}(f_3)= X_3^{\alpha_3}$ by $(2)$, ${\rm LM}(f_4)=X_4^{\alpha_4}$ by $(3)$
If we also let 
\begin{enumerate}
	\item[(4)]  $\alpha_2>\alpha_{21}+1$
\end{enumerate}
then ${\rm LM}(f_2)=X_1^{\alpha_{21}}X_4$ by $(4)$. Then if $n_1<n_2<n_3<n_4$, using the remarks in \cite{Sahin}, 

\begin{enumerate}
	\item[(5)] $\alpha_{21}+\alpha_3 > \alpha_4$ 
\end{enumerate}

\begin{enumerate}
	\item[(6)] $\alpha_1+\alpha_{21}+1\geq \alpha_2+\alpha_4$
\end{enumerate} 
Now using remark \ref{f7},

\begin{enumerate}
	\item[(7)] $2\alpha_2+1 < 2\alpha_{21}+\alpha_1$
\end{enumerate} 
These determine ${\rm LM}(f_5)$, and will determine ${\rm LM}(f_6)$ and ${\rm LM}(f_7)$. We know that the standard basis when $\alpha_4=2$ depends on the parameter $k$. We will show that the standard basis when $\alpha_4=3$ depends on three parameters $k$, $s$ and $l$ defined as follows:
\begin{definition}
	Define $k,l$ and $s$ as the smallest integers satisfying 
	$$(k-1)\alpha_1+(k+1)\alpha_{21}+\alpha_3>k\alpha_2+(k+1)$$
	
	$$s\alpha_1+(2s+2)\alpha_{21}+\alpha_3 > (2s+1)\alpha_2+(s+2)$$
	
	$$l\alpha_1+(2l+3)\alpha_{21}+\alpha_3 > (2l+2)\alpha_2+(l+1)$$ 
	
respectively.
\end{definition}
\begin{remark}
	$s$, $k$ and $l$ always exist.
\end{remark}
\begin{proof}
Assume to the contrary that $s$ does not exist. Then for any integer $i$ , $i\alpha_1+(2i+2)\alpha_{21}+\alpha_3\leq(2i+1)\alpha_2+i+2\implies i(\alpha_1+2\alpha_{21}-\alpha_2-1)\leq \alpha_2+2-\alpha_3-2\alpha_{21}$   which gives a contradiction since the right hand side of this inequality is fixed and $\alpha_1+2\alpha_{21}-\alpha_2-1$ is positive by $(6)$.

Using a similar argument,  $k$ and $l$ always exist.
\end{proof}

\begin{remark}
	If $n_1<n_2<n_3<n_4$, then $k$ is at most $2$.
\end{remark}
\begin{proof}
	Assume to the contrary that $k>2$. Then 
$$\begin{array}{rllll}
	2(\alpha_2+1)&\geq & \alpha_1+3\alpha_{21}+\alpha_3-1&&\\
	2\alpha_2&\geq&\alpha_1+3\alpha_{21}+\alpha_3-3&>\alpha_1+\alpha_{21}&
\end{array}$$
Then, \begin{equation}\label{eq1}
\alpha_1+\alpha_{21}-2\alpha_2<0
\end{equation}
On the other hand, $n_1<n_2$ implies

$$\begin{array}{rllll}
2\alpha_2\alpha_3+1&<&2\alpha_{21}\alpha_3+(\alpha_1-\alpha_{21}-1)(\alpha_3-1)+\alpha_3&&\\
\alpha_1-\alpha_{21}&<&\alpha_3(\alpha_1+\alpha_{21}-2\alpha_2)&<&0\\
\alpha_1-\alpha_{21}&<&0&&
\end{array}$$

which is a contradiction. Hence, $k$ can not exceed $2$.
	\end{proof}

\begin{remark}
	$s\leq l$
\end{remark}
\begin{proof}
We know by $(4)$ that $\alpha_2-\alpha_{21}-1> 0$. If $l$ satisfies $l\alpha_1+(2l+3)\alpha_{21}+\alpha_3 > (2l+2)\alpha_2+(l+1)$ then $l\alpha_1+(2l+2)\alpha_{21}+\alpha_3 > (2l+1)\alpha_2+(l+2)+\alpha_2-\alpha_{21}-1>(2l+1)\alpha_2+(l+2)$. Since s is the smallest integer satisfying this inequality, we must have $s\leq l$ 
	\end{proof}
	\begin{remark}
	If $k=1$, then $s=0$
	\end{remark}
\begin{proof}
If $k=1$, then $\alpha_2+1< 2\alpha_{21}+\alpha_3-1$ which is $s\alpha_1+(2s+2)\alpha_{21}+\alpha_3 > (2s+1)\alpha_2+s+2.$ for $s=0$. Hence if $k=1$, then $s=0$.
\end{proof}

\begin{theorem}\label{mainthm}
	Let $S=\langle n_1,n_2,n_3,n_4 \rangle$ be a 4-generated pseudosymmetric numerical semigroup with $n_1<n_2<n_3<n_4$ and $\alpha_2>\alpha_{21}+1$. If $k, s $ and $l$ be  defined as above,then the standard basis for $I_S$ is
	$$G=\{f_1,f_2,f_3,f_4,f_5,f_6,f_7,g_0,g_1,g_2,...,g_s,h_{s},h_{s+1}...,h_{l}\}$$
	where $f_6=X_1^{\alpha_1+\alpha_{21}}-X_2^{\alpha_2}X_3X_4$, $f_{7}=X_1^{\alpha_1+2\alpha_{21}}-X_2^{2\alpha_2}X_3$ , $g_i=X_2^{(2i+1)\alpha_{2}+1}X_4-X_1^{i\alpha_1+(2i+2)\alpha_{21}+1}X_3^{\alpha_3-(i+1)}$ and $h_j=X_2^{(2j+2)\alpha_{2}+1}-X_1^{j\alpha_1+(2j+3)\alpha_{21}+1}X_3^{\alpha_3-(j+1)}$
\end{theorem}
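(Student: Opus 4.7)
The plan is to verify Mora's standard basis criterion: the set $G$ is a standard basis of $I_S$ with respect to the local (negative degree reverse lex) ordering if and only if every element of $G$ lies in $I_S$ and the Mora normal form of $\operatorname{spoly}(g,g')$ modulo $G$ vanishes for every pair $g,g' \in G$. I would begin by checking membership. For $f_1,\dots,f_5$ this is given, and for $f_6$, $f_7$, $g_i$, $h_j$ it follows by substituting $X_i \mapsto t^{n_i}$ and reading off the corresponding linear identity among $n_1,n_2,n_3,n_4$; each such identity is a consequence of the pseudo-symmetric parametrisation, so the verification is a direct expansion. Next I would pin down the leading monomials. Conditions (1)--(7) fix $\operatorname{LM}(f_1),\dots,\operatorname{LM}(f_5)$; conditions (6) and (7) give $\operatorname{LM}(f_6)=X_2^{\alpha_2}X_3X_4$ and $\operatorname{LM}(f_7)=X_2^{2\alpha_2}X_3$. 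The inequalities defining $s$ and $l$ are exactly the degree comparisons between the two monomials of $g_i$ and $h_j$: they mark the indices at which the $X_2$-side first becomes the lower-degree term, so the leading monomials of $g_i$ and $h_j$ are determined unambiguously.

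The core of the proof is the S-polynomial analysis, which I would organise as a recursion. Pairs among $f_1,\dots,f_5$ produce, after Mora reduction, exactly the three new binomials $f_6$, $f_7$, and $g_0$. Pairs of these with $f_1,f_2,f_5$ then produce $g_1, g_2, \dots$ one at a time, following a single structural pattern in which the intermediate polynomial is multiplied by $X_1^{\alpha_{21}}$ (or a related monomial) and simplified via $f_1$ and $f_2$; this shifts the exponent quadruple $(2i{+}1,\,i\alpha_1{+}(2i{+}2)\alpha_{21}{+}1,\,\alpha_3{-}i{-}1)$ to the next index. The termination of the $g$-series at $i=s$, and the subsequent emergence of the $h$-series, is precisely the first index at which the defining inequality for $s$ holds, so that the new binomial appears in the $h$-normal form instead of the $g$-normal form. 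The same mechanism then produces $h_{s+1},\dots,h_l$, after which every additional S-polynomial has its leading monomial divisible by some existing $\operatorname{LM}(g_i)$ or $\operatorname{LM}(h_j)$, forcing the Mora normal form to be zero.

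The main obstacle is the exponent bookkeeping at the two transition points (from $g$'s to $h$'s at index $s$, and the final termination at $l$). Each S-polynomial reduction must be tracked explicitly to see that the resulting binomial still lies in the pattern, and the inequalities (1)--(7) together with the pseudo-symmetric relations among the $n_i$ are used at every cancellation. The auxiliary remarks that $k \le 2$, that $s \le l$, and that $k=1$ forces $s=0$ restrict the analysis to a small number of configurations, so a complete check, though lengthy, is finite. The bulk of the write-up will consist of displaying, for each pair $(g,g') \in G \times G$, the explicit Mora reduction and verifying that every remainder lies in the list produced by the recursion above.
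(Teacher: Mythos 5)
Your plan is essentially the paper's own proof: the paper likewise runs the standard basis algorithm with NFMora, fixes the leading monomials via conditions (1)--(7) and the definitions of $k,s,l$, and reduces every S-pair to zero, with the new elements arising recursively (e.g.\ ${\rm spoly}(f_1,f_2)=f_6$, ${\rm spoly}(f_2,f_6)=f_7$, ${\rm spoly}(f_2,f_5)=g_0$, ${\rm spoly}(f_7,g_i)=g_{i+1}$, ${\rm spoly}(f_2,g_s)=h_s$, ${\rm spoly}(f_7,h_j)=h_{j+1}$). The only substantive content you have not supplied is the explicit pair-by-pair reductions themselves (note the chains are driven by $f_7$ rather than $f_1,f_2,f_5$), which is exactly the bulk of the paper's argument.
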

Before we prove the theorem let's state and prove the next lemma:
\begin{lemma}{\label{gj}}
${\rm NF}(g_j|G)=0$ for $j>s$.
\end{lemma}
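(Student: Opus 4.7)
The plan is to verify ${\rm NF}(g_j|G) = 0$ by exhibiting an explicit standard representation of $g_j$ in terms of $g_s$ and $f_7$, both of which lie in $G$.

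\textbf{Leading monomials.} Since $j > s$ and $s$ is the minimal integer satisfying the defining inequality $s\alpha_1 + (2s+2)\alpha_{21} + \alpha_3 > (2s+1)\alpha_2 + (s+2)$, the same strict inequality holds with $j$ in place of $s$. Comparing the total degrees of the two monomials of $g_j$ (respectively $g_s$) then yields ${\rm LM}(g_j) = X_2^{(2j+1)\alpha_2+1}X_4$ and ${\rm LM}(g_s) = X_2^{(2s+1)\alpha_2+1}X_4$; by condition $(7)$, ${\rm LM}(f_7) = X_2^{2\alpha_2}X_3$.

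\textbf{Key identity.} A direct expansion using $f_7 = X_1^{\alpha_1+2\alpha_{21}} - X_2^{2\alpha_2}X_3$ verifies that
$$g_i = X_2^{2\alpha_2}\,g_{i-1} - X_1^{(i-1)\alpha_1 + 2i\alpha_{21}+1}\,X_3^{\alpha_3-(i+1)}\,f_7$$
for every $i$ with $s+1 \le i \le \alpha_3-1$. The upper bound is automatic because $g_j$ being a genuine polynomial already forces $j \le \alpha_3 - 1$, so the chain of identities for $i = j,j-1,\ldots,s+1$ is valid.

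\textbf{Telescoping.} Iterating the above identity from $i = j$ down to $i = s+1$ yields
$$g_j = X_2^{2(j-s)\alpha_2}\,g_s - f_7\sum_{t=0}^{j-s-1} X_2^{2t\alpha_2}\,X_1^{(j-t-1)\alpha_1 + 2(j-t)\alpha_{21}+1}\,X_3^{\alpha_3-(j-t+1)}.$$
Every monomial multiplier appearing above has non-negative exponents in the admissible range.

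\textbf{Standard representation.} The first summand $X_2^{2(j-s)\alpha_2}g_s$ has leading monomial exactly ${\rm LM}(g_j)$, so it cancels the leading term of $g_j$ in the first Mora reduction. For each $t$ in the sum, setting $i = j-t-1 \ge s$ and inserting into the defining inequality of $s$ shows that the leading monomial of the corresponding $f_7$-multiple has total degree no less than $\deg{\rm LM}(g_j)$. Hence the displayed expression is a standard representation of $g_j$ with respect to $\{g_s, f_7\} \subset G$, and by the standard criterion this implies ${\rm NF}(g_j|G) = 0$.

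\textbf{Main obstacle.} The crux is the degree comparison in the last step; it reduces, via the substitution $i = j-t-1$, back to the defining inequality of $s$ applied at the shifted index $i \ge s$, which holds for every $t \in \{0,\ldots,j-s-1\}$. Beyond this, the remaining work is the straightforward bookkeeping of the telescoped exponents.
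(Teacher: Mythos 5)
Your proposal is correct and is essentially the paper's own argument written in closed form: the paper cancels ${\rm LM}(g_j)$ against $X_2^{2(j-s)\alpha_2}g_s$ via ${\rm spoly}(g_j,g_s)$ and then performs $j-s$ successive Mora reductions by $f_7$, which is precisely your telescoped identity $g_j = X_2^{2(j-s)\alpha_2}g_s - f_7\sum_{t}(\cdots)$, with the same degree bookkeeping reducing to the defining inequality of $s$ at the shifted index $i=j-t-1\ge s$. The one point to tighten is that the inequality holding at every index $j>s$ does not follow from minimality of $s$ alone but from its monotonicity in the index, which is supplied by condition $(7)$ (i.e.\ $\alpha_1+2\alpha_{21}>2\alpha_2+1$) --- a fact the paper also uses implicitly when it asserts $T_{g_j}=\{g_s\}$.
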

\begin{proof}
$T_{g_j}=\{g_s\}$ and ${\rm spoly}(g_j,g_s)=X_1^{s\alpha_1+(2s+2)\alpha_{21}+1}X_2^{2(j-s)\alpha_2}X_3^{\alpha_3-(s+1)}-X_1^{j\alpha_1+(2j+2)\alpha_{21}+1}X_3^{\alpha_3-(j+1)}=r_1$. $T_{r_1}=\{f_7\}$ and ${\rm spoly}(r_1,f_7)=X_1^{(s+1)\alpha_1+(2s+4)\alpha_{21}+1}X_2^{2(j-s-1)\alpha_2}X_3^{\alpha_3-(s+2)}-X_1^{j\alpha_1+(2j+2)\alpha_{21}+1}X_3^{\alpha_3-(j+1)}=r_2.$ $T_{r_2}=\{f_7\}$ and continuing inductively $r_{j-s}={\rm spoly}(r_{j-s-1},f_7)=X_1^{(j-1)\alpha_1+2j\alpha_{21}+1}X_3^{\alpha_3-(j+1)}f_7$. Hence, ${\rm NF}(g_j|G)=0$.
\end{proof}
Now we are ready for the proof of the theorem \ref{mainthm}.
\begin{proof}
	We will prove the theorem by applying standard basis algorithm with NFM\tiny{ORA} \normalsize  as the normal form algorithm, see \cite{greuel-pfister}. Here $G=\{f_1,f_2,f_3,f_4,f_5,f_6,f_7,g_0,g_1,g_2,...,g_s,h_{s},h_{s+1}...,h_{t}\}$ and $T_h$ denotes the set $\{g \in G : {\rm LM}(g) \mid {\rm LM}(h)\}$ and ${\rm ecart}(h)$ is
	${deg}(h)-{deg}({\rm LM}(h))$. Note that ${\rm LM}(f_6)=X_2^{\alpha_2}X_3X_4$ by $(6)$ , ${\rm LM}(g_i)=X_1^{i\alpha_1+(2i+2)\alpha_{21}+1}X_3^{\alpha_3-(i+1)}$ for all $i<s$, ${\rm LM}(g_s)=X_2^{(2s+1)\alpha_{2}+1}X_4$, ${\rm LM}(h_j)=X_1^{j\alpha_1+(2j+3)\alpha_{21}+1}X_3^{\alpha_3-(j+1)}$ for all $s\leq j < l$ and ${\rm LM}(h_l)=X_2^{(2l+2)\alpha_{2}+1}$ by the definitions of $s$ and $l$ .

	\begin{center}
		\underline{ For $k=1$:}
	\end{center}
	In this case $g_0=X_1^{2\alpha_{21}+1}X_3^{\alpha_3-1}-X_2^{\alpha_2+1}X_4$ and $\alpha_2+1<2\alpha_{21}+\alpha_3$ which implies that ${\rm LM}(g_0)=X_2^{\alpha_2+1}X_4$ . We need to show that $NF(\text{spoly}(f_m,f_n) \vert G)=0$ for all $m,n$ with $1 \leq m < n \leq 6$ .
	
	\begin{itemize}
		\item  ${\rm spoly}(f_1,f_2)=f_6$ and hence  $NF({\rm spoly}(f_1,f_2) \vert G)=0$
		\item   ${\rm spoly}(f_1,f_3)=X_1^{\alpha_{1}}X_3^{\alpha_3-1}-X_1^{\alpha_1-\alpha_{21}-1}X_2X_4^2$ and ${\rm LM}({\rm spoly}(f_1,f_{3}))=X_1^{\alpha_1-\alpha_{21}-1}X_2X_4^2$ by $(5)$. Let $r_1={\rm spoly}(f_1,f_3)$. If $\alpha_{1}<2\alpha_{21}+1$ then $T_{r_1}=\{f_5\}$ and since ${\rm spoly}(r_1,g)=0$,  $NF({\rm spoly}(f_1,f_3) \vert G)=0$. Otherwise $T_{r_1}=\{f_2\}$ and ${\rm spoly}(r_1,f_2)=X_1^{\alpha_1-2\alpha_{21}-1}X_2^{\alpha_2+1}X_4-X_1^{\alpha_{1}}X_3^{\alpha_{3}-1}$. Set $r_2={\rm spoly}(r_1,f_2)$, ${\rm LM}(r_2)=X_1^{\alpha_1-2\alpha_{21}-1}X_2^{\alpha_2+1}X_4$ and $T_{r_2}=\{f_7\}$ and  ${\rm spoly}(r_2,f_7)=0$, hence $NF({\rm spoly}(f_1,f_3) \vert G)=0$ . 
		\item   ${\rm spoly}(f_1,f_4)=X_1^{\alpha_1}X_4-X_1X_2^{\alpha_{2}-1}X_3^{\alpha_3}$. Set $r_1={\rm spoly}(f_1,f_4)$. If ${\rm LM}(r_1)=X_1^{\alpha_1}X_4$ then $T_{r_1}=\{f_2\}$ and ${\rm spoly}(r_1,f_2)=X_1X_2^{\alpha_2-1}f_3$. If ${\rm LM}(r_1)=X_1X_2^{\alpha_{2}-1}X_3^{\alpha_3}$ then $T_{r_1}=\{f_3\}$ and ${\rm spoly}(r_1,f_3)=X_1^{\alpha_1-\alpha_{21}}f_2$. Hence in both cases, $NF({\rm spoly}(f_1,f_4) \vert G)=0$
		\item   ${\rm spoly}(f_1,f_5)=X_1^{\alpha_{21}+1}X_3^{\alpha_3}-X_1^{\alpha_{1}}X_2=X_1^{\alpha_{21}+1}f_3$ hence $NF({\rm spoly}(f_1,f_5) \vert G)=0$
		\item   ${\rm spoly}(f_1,f_6)=X_1^{\alpha_1}f_2$ and hence $NF({\rm spoly}(f_1,f_6) \vert G)=0$
		\item  ${\rm spoly}(f_1,f_7)=X_1^{\alpha_1+2\alpha_{21}}X_4^2-X_1^{\alpha_1}X_2^{2\alpha_2}=r_1$ then ${\rm LM}(r_1)=X_1^{\alpha_1+2\alpha_{21}}X_4^2$ by $(4)$ and $T_{r_1}=\{f_2\}$. Then ${\rm spoly}(r_1,f_2)=X_1^{\alpha_1}X_2^{\alpha_{2}}f_2$. Hence $NF({\rm spoly}(f_1,f_7) \vert G)=0$.
		\item $NF({\rm spoly}(f_2,f_3) \vert G)=0$ as ${\rm LM}(f_2)$ and ${\rm LM}(f_3)$ are relatively prime.
		\item   ${\rm spoly}(f_2,f_4)=X_2^{\alpha_2-1}f_5$ and hence $NF({\rm spoly}(f_2,f_4) \vert G)=0$
		\item   ${\rm spoly}(f_2,f_5)=g_0$ and hence $NF({\rm spoly}(f_2,f_5) \vert G)=0$
		\item  ${\rm spoly}(f_2,f_6)=f_7$    and hence $NF({\rm spoly}(f_2,f_6) \vert G)=0$.
		\item  $NF({\rm spoly}(f_2,f_7) \vert G)=0$ as ${\rm LM}(f_2)$ and ${\rm LM}(f_7)$ are relatively prime. 
	
		\item $NF({\rm spoly}(f_3,f_4) \vert G)=0$ as ${\rm LM}(f_3)$ and ${\rm LM}(f_4)$ are relatively prime.
		\item  $NF({\rm spoly}(f_3,f_5) \vert G)=0$ as ${\rm LM}(f_3)$ and ${\rm LM}(f_4)$ are relatively prime.
	
		\item   ${\rm spoly}(f_3,f_6)=X_1^{\alpha_1-\alpha_{21}-1}g_0$ and hence $NF({\rm spoly}(f_3,f_6) \vert G)=0$

		\item   ${\rm spoly}(f_3,f_7)=X_1^{\alpha_1-\alpha_{21}-1}h_0$ (since $k=1$, $s=0$ and $h_0 \in G$) .  Hence, $NF({\rm spoly}(f_3,f_7) \vert G)=0$
		\item   ${\rm spoly}(f_4,f_5)=X_1X_3^{\alpha_3-1}f_2$ and hence $NF({\rm spoly}(f_4,f_5) \vert G)=0$
	
		\item  ${\rm spoly}(f_4,f_6)=X_1X_2^{2\alpha_2-1}X_3^{\alpha_3}-X_1^{\alpha_1+\alpha_{21}}X_4^2=r_1$. If ${\rm LM}(r_1)=X_1X_2^{2\alpha_2-1}X_3^{\alpha_3}$; $T_{r_1}=\{f_3\}$ and ${\rm spoly}(f_3,r_1)=X_1^{\alpha_1+\alpha_{21}}X_4^2-X_1^{\alpha_1-\alpha_{21}}X_2^{2\alpha_2}=r_2$. ${\rm LM}(r_2)=X_1^{\alpha_1+\alpha_{21}}X_4^2$ and $T_{r_2}=\{f_2\}$. Then ${\rm spoly}(f_2,r_2)=0$. If ${\rm LM}(r_1)=X_1^{\alpha_1+\alpha_{21}}X_4^2$ ;$T_{r_1}=\{f_2\}$ and ${\rm spoly}(f_2,r_1)=X_1^{\alpha_1}X_2^{\alpha_2}X_4-X_1X_2^{2\alpha_2-1}X_3^{\alpha_3}=r_2$. ${\rm LM}(r_2)=X_1^{\alpha_1}X_2^{\alpha_2}X_4$ and $T_{r_2}=\{f_3\}$. Then ${\rm spoly}(f_3,r_2)=0$. 	Hence, in both cases $NF({\rm spoly}(f_4,f_6) \vert G)=0$
	
		\item	$NF({\rm spoly}(f_4,f_7) \vert G)=0$ as ${\rm LM}(f_4)$ and ${\rm LM}(f_7)$ are relatively prime.
	
		\item   ${\rm spoly}(f_5,f_6)=X_1^{\alpha_{21}+1}X_2^{\alpha_2-1}X_3^{\alpha_3}-X_1^{\alpha_{1}+\alpha_{21}}X_4$ and let $r_1= {\rm spoly}(f_5,f_6)$. If ${\rm LM}(r_1)=X_1^{\alpha_{21}+1}X_2^{\alpha_2-1}X_3^{\alpha_3}$ then $T_{r_1}=\{f_3\}$ and ${\rm spoly}(r_1,f_3)=X_1^{\alpha_1-\alpha_{21}-1}f_2$ and hence $NF({\rm spoly}(f_5,f_6) \vert G)=0$. If ${\rm LM}(r_1)= X_1^{\alpha_{1}+\alpha_{21}}X_4$ then $T_{r_1}=\{f_2\}$ and ${\rm spoly}(r_1,f_2)=X_2^{\alpha_2-1}f_3$ and hence $NF({\rm spoly}(f_5,f_6) \vert G)=0$

		\item  ${\rm spoly}(f_5,f_7)=X_1^{\alpha_1+2\alpha_{21}}X_4^2-X_1^{\alpha_{21}+1}X_2^{2\alpha_2-1}X_3^{\alpha_3}=r_1$.  \newline
		If ${\rm LM}(r_1)=X_1^{\alpha_1+2\alpha_{21}}X_4^2$, then $T_{r_1}=f_2$ and ${\rm spoly}(r_1,f_2)= X_1^{\alpha_1+\alpha_{21}}X_2^{\alpha_2}X_4-X_1^{\alpha_{21}+1}X_2^{2\alpha_2-1}X_3^{\alpha_3}=r_2$. Depending on the leading monomial of $r_2$, $T_{r_2}$ is either $f_2$ or $f_3$. If it is $f_2$,  ${\rm spoly}(r_2,f_2)=X_1^{\alpha_{21}+1}X_2^{2\alpha_2-1}f_3$. If it is $f_3$, ${\rm spoly}(r_2,f_3)=X_1^{\alpha_{1}}X_2^{\alpha_2}f_2$ and hence $NF({\rm spoly}(f_5,f_7) \vert G)=0$.\newline
		If ${\rm LM}(r_1)=X_1^{\alpha_{21}+1}X_2^{2\alpha_2-1}X_3^{\alpha_3}$, then $T_{r_1}=f_3$ and ${\rm spoly}(r_1,f_3)=X_1^{\alpha_1}X_2^{2\alpha_2}-X_1^{\alpha_1+2\alpha_{21}}X_4^2=r_2.$  ${\rm LM}(r_2)= X_1^{\alpha_1+2\alpha_{21}}X_4^2$ and $T_{r_2}=\{f_2\}$. Then, ${\rm spoly}(r_2,f_2)= X_1^{\alpha_1}X_2^{\alpha_2}f_2$. Hence, $NF({\rm spoly}(f_5,f_7) \vert G)=0$

		\item  ${\rm spoly}(f_6,f_7)=X_1^{\alpha_1+\alpha_{21}}f_2$ and hence,  $NF({\rm spoly}(f_6,f_7) \vert G)=0$

		\item ${\rm spoly}(f_1,g_0)=X_1^{\alpha_1}X_2^{\alpha_2+1}-X_1^{2\alpha_{21}+1}X_3^{\alpha_3}X_4=r_1$. Using $(2)$ and $(4)$, ${\rm LM}(r_1)=X_1^{2\alpha_{21}+1}X_3^{\alpha_3}X_4$ and $T_{r_1}=\{f_2,f_3\}$. If ${\rm ecart}(f_2)$ is minimal, ${\rm spoly}(r_1,f_2)= X_1^{\alpha_{21}+1}X_2^{\alpha_2}f_3$ . If ${\rm ecart}(f_3)$ is minimal, ${\rm spoly}(r_1,f_3)=X_1^{\alpha_1}X_2f_2$. Hence, in both cases $NF({\rm spoly}(f_1,g_0) \vert G)=0$
		
		\item$NF({\rm spoly}(f_2,g_0) \vert G)=0$ as ${\rm spoly}(f_2,g_0)=h_0$  and $h_0 \in G$.
			
		\item   $NF({\rm spoly}(f_3,g_0) \vert G)=0$ as   ${\rm LM}(f_3)$ and ${\rm LM}(g_0)$ are relatively prime.
				
		\item ${\rm spoly}(f_4,g_0)=X_1^{2\alpha_{21}+1}X_3^{\alpha_3-1}X_4^2-X_1X_2^{2\alpha_2}X_3^{\alpha_3-1}=r_1$. ${\rm LM}(r_1)=X_1^{2\alpha_{21}+1}X_3^{\alpha_3-1}X_4^2$ and $T_{r_1}=\{f_1,f_2\}$ but ${\rm ecart}(f_2)$ is minimal. ${\rm spoly}(r_1,f_2)=X_1X_2^{\alpha_2}X_3^{\alpha_3-1}f_2$ and hence $NF({\rm spoly}(f_4,g_0) \vert G)=0$ 
					
		\item ${\rm spoly}(f_5,g_0)=X_1^{\alpha_{21}+1}X_3^{\alpha_3-1}f_2$ and hence $NF({\rm spoly}(f_5,g_0) \vert G)=0$ 
						
		\item ${\rm spoly}(f_6,g_0)=X_1^{2\alpha_{21}+1}f_3$ and hence $NF({\rm spoly}(f_6,g_0) \vert G)=0$ 
							
	    \item ${\rm spoly}(f_7,g_0)= X_1^{2\alpha_{21}+1}X_2^{\alpha_2-1}X_3^{\alpha_3}-X_1^{\alpha_1+2\alpha_{21}}X_4=r_1$. If ${\rm LM}(r_1)=X_1^{2\alpha_{21}+1}X_2^{\alpha_2-1}X_3^{\alpha_3}$, then $T_{r_1}=\{f_3\}$ and ${\rm spoly}(r_1,f_3)=X_1^{\alpha_1+\alpha_{21}}f_2$ \\
	    If ${\rm LM}(r_1)=X_1^{\alpha_1+2\alpha_{21}}X_4$, then $T_{r_1}=\{f_2\}$ and ${\rm spoly}(r_1,f_2)=X_1^{2\alpha_{21}+1}X_2^{\alpha_2-1}f_3$ so in both cases, $NF({\rm spoly}(f_7,g_0) \vert G)=0$.
	    							
	    \item $NF({\rm spoly}(g_0,h_j)\vert G)=0$ as ${\rm LM}(g_0)$ and ${\rm LM}(h_j)$ are relatively prime for all $0\leq j <l$.

	   \item ${\rm spoly}(g_0,h_l)=X_1^{t\alpha_1+(2l+3)\alpha_{21}+1}X_3^{\alpha_3-(l+1)}X_4-X_1^{2\alpha_{21}+1}X_2^{(2l+1)\alpha_2}X_3^{\alpha_3-1}=r_1$ and ${\rm LM}(r_1)=$\newline$ X_1^{2\alpha_{21}+1}X_2^{(2l+1)\alpha_2}X_3^{\alpha_3-1}$. $T_{r_1}=\{f_7\}$ and ${\rm spoly}(r_1,f_7)=X_1^{\alpha_1+
	   	4\alpha_{21}+1}X_3^{\alpha_3-(l+1)}\left[ X_1^{(l-1)\alpha_1+(2l-1)\alpha_{21}}X_4-\right.$ \newline $\left. X_2^{(2l-1)\alpha_2}X_3^{l-1}\right]=r_2$. $T_{r_2}=\{f_7\}$ and  
	   ${\rm spoly}(r_2,f_7)=X_1^{2\alpha_1+6\alpha_{21}+1}X_3^{\alpha_3-(l+1)}\left[ X_1^{(l-2)\alpha_1+(2l-3)\alpha_{21}}X_4-\right.$\newline$\left.X_2^{(2l-3)\alpha_2}X_3^{l-2}\right]=r_3$. $T_{r_3}=\{f_7\}$ and continuing inductively we obtain   $r_{l+1}={\rm spoly}(r_l,f_7)=X_1^{l\alpha_1+(2l+2)\alpha_{21}+1}X_3^{\alpha_3-(l+1)}f_2$ and hence $NF({\rm spoly}(g_0,h_l) \vert G)=0$ . 
	   
	    \item  $NF({\rm spoly}(h_j,h_l) \vert G)=0$ as the leading monomials are relatively prime.  
	     \item ${\rm spoly}(f_1,h_j)= X_1^{(j+1)\alpha_1+(2j+3)\alpha_{21}+1}X_3^{\alpha_3-(j+2)}-X_2^{(2j+2)\alpha_2+1}X_4^2=r_1$.$(6)$ and $j>s$  implies ${\rm LM}(r_1)=X_2^{(2j+2)\alpha_2+1}X_4^2$ and $T_{r_1}=\{f_5,g_s\}$. Since
	     ${\rm ecart}(f_5)$ is minimal by $(7)$,  ${\rm spoly}(r_1,f_5)=X_1^{\alpha_{21}+1}X_2^{(2j+2)\alpha_2}X_3^{\alpha_3-1}-X_1^{(j+1)\alpha_2+(2j+3)\alpha_{21}+1}X_3^{\alpha_3-(j+2)}=r_2$. $T_{r_2}=\{ f_7\}$. Then, ${\rm spoly}(r_2,f_7)=X_1^{\alpha_1+3\alpha_{21}+1}X_3^{\alpha_3-(j+2)}\left[ X_1^{j\alpha_1+2j\alpha_{21}}-X_2^{2j\alpha_2}X_3^{j} \right]=r_3$.    $T_{r_3}=\{ f_7\}$ and continuing inductively, we obtain $r_{j+2}={\rm spoly}(r_{j+1},f_7)=X_1^{(j+1)\alpha_1+(2j+1)\alpha_{21}+1}X_3^{\alpha_3-(j+2)} f_7$ and hence $NF({\rm spoly}(f_1,h_j) \vert G)=0$ .
	     \item  $NF({\rm spoly}(f_1,h_l) \vert G)=0$ as the leading monomials are relatively prime.   
	      \item ${\rm spoly}(f_2,h_j)=X_2^{\alpha_2}g_j$. Hence, by lemma \ref{gj}, $NF({\rm spoly}(f_2,h_j) \vert G)=0$.
	      \item $NF({\rm spoly}(f_2,h_l) \vert G)=0$ as the leading monomials are relatively prime.  
	     	\item ${\rm spoly}(f_3,h_j)=X_1^{(j+1)\alpha_1+(2j+2)\alpha_{21}}X_2-X_2^{(2j+2)\alpha_2+1}X_3^{j+1}$. Set this as $r_1$. Then ${\rm LM}(r_1)=X_2^{(2j+2)\alpha_2+1}X_3^{j+1}$ and $T_{r_1}=\left\lbrace f_7 \right\rbrace $. ${\rm spoly}(r_1,f_7)=X_1^{\alpha_1+2\alpha_{21}}X_2\left[ X_2^{2j\alpha_2}X_3^j-X_1^{(j)\alpha_1+2j\alpha_{21}}\right]=r_2 $. $T_{r_2}=\left\lbrace f_7 \right\rbrace $ and continuing inductively, we obtain $r_{j+1}={\rm spoly}(r_{j},f_7)=X_1^{j\alpha_1+2j\alpha_{21}}X_2f_7$ and hence $NF({\rm spoly}(f_3,h_j) \vert G)=0$ .
	     	  \item $NF({\rm spoly}(f_3,h_l) \vert G)=0$ as the leading monomials are relatively prime.  
	      \item$NF({\rm spoly}(f_4,h_j) \vert G)=0$ as the leading monomials are relatively prime.  
	     	  \item $NF({\rm spoly}(f_4,h_l) \vert G)=0$ as the leading monomials are relatively prime.
	     	    \item $NF({\rm spoly}(f_5,h_j) \vert G)=0$ as the leading monomials are relatively prime. 
	     	    \item ${\rm spoly}(f_5,h_l)=X_1^{l\alpha_1+(2l+3)\alpha_{21}+1}X_3^{\alpha_3-(l+1)}X_4^{2}-X_1^{\alpha_{21}+1}X_2^{(2l+2)\alpha_2}X_3^{\alpha_3-1}$. Set this as $r_1$. If  ${\rm LM}(r_1)=X_1^{l\alpha_1+(2l+3)\alpha_{21}+1}X_3^{\alpha_3-(l+1)}X_4^{2}$ then $T_{r_1}=\{f_1,f_2,f_7\}$ but since ${\rm ecart}(f_1)$ is minimal among these, ${\rm spoly}(r_1,f_1)=X_1^{\alpha_{21}+1}X_3^{\alpha_3-(l+2)} \left[ X_1^{(l+1)\alpha_1+(2l+2)\alpha_{21}}-X_2^{2(l+1)\alpha_2}X_3^{(l+1)} \right]=r_2 $ and $T_{r_2}=\{f_7\}$.  ${\rm spoly}(r_2,f_7)=X_1^{\alpha_{21}+1}X_2^{2\alpha_2}X_3^{\alpha_3-(l+1)} \left[ X_1^{(l)\alpha_1+(2l)\alpha_{21}}-X_2^{2(l)\alpha_2}X_3^{(l)} \right]=r_3 $ and $T_{r_3}=\{f_7\}$. Continuing inductively we obtain, ${\rm spoly}(r_{l+1},f_7)=X_1^{\alpha_{21}+1}X_2^{2l\alpha_2}X_3^{\alpha_3-2}f_7$ which implies  $NF({\rm spoly}(f_5,h_l) \vert G)=0$.  If  ${\rm LM}(r_1)=X_1^{\alpha_{21}+1}X_2^{(2l+2)\alpha_2}X_3^{\alpha_3-1}$ then $T_{r_1}=\{f_7\}$ and \newline
	     	    ${\rm spoly}(r_1,f_7)=X_1^{\alpha_1+3\alpha_{21}+1}X_3^{\alpha_3-(l+1)} \left[ X_2^{2l\alpha_2}X_3^{(l-1)}- X_1^{(l-1)\alpha_1+(2l)\alpha_{21}}X_4^2 \right]=r_2 $. Then, $T_{r_2}=\{f_7\}$ and  ${\rm spoly}(r_2,f_7)=X_1^{2\alpha_1+5\alpha_{21}+1}X_3^{\alpha_3-(l+1)} \left[ X_2^{2(l-1)\alpha_2}X_3^{(l-2)} -X_1^{(l-2)\alpha_1+(2l-2)\alpha_{21}}X_4^2\right]=r_3 $ and\newline $T_{r_3}=\{f_7\}$. Continuing inductively we obtain, $r_{l+1}={\rm spoly}(r_{l},f_7)=X_1^{l\alpha_1+(2l+1)\alpha_{21}+1}X_3^{\alpha_3-(l+1)}\left[\right.$  $\left. X_2^{2\alpha_2}-X_1^{2\alpha_{21}}X_4^2\right]$. $T_{r_{l+1}}=\{f_2\}$ and ${\rm spoly}(r_{l+1},f_2)=X_1^{l\alpha_1+(2l+1)\alpha_{21}+1}X_2^{\alpha_2}X_3^{\alpha_3-(l+1)}f_2$. Hence, $NF({\rm spoly}(f_5,h_l) \vert G)=0$ in this case, too.
	     	    
	     	   \item ${\rm spoly}(f_6,h_j)=g_{j+1}$ for all $s\leq j<l$. Hence  by lemma \ref{gj} $NF({\rm spoly}(f_6,h_j) \vert G)=0$.
	     	    \item ${\rm spoly}(f_6,h_l)=X_1^{\alpha_1+\alpha_{21}}X_2^{(2l+1)\alpha_2+1}-X_1^{l\alpha_1+(2l+3)\alpha_{21}+1}X_3^{\alpha_3-l}X_4=r_1$. ${\rm LM}(r_1)=X_1^{l\alpha_1+(2l+3)\alpha_{21}+1}X_3^{\alpha_3-l}X_4$  by the definition of $l$ and $(4).$ Then $T_{r_1}=\{ f_2\}$  and ${\rm spoly}(r_1,f_2)=X_1^{\alpha_1+\alpha_{21}}X_2^{\alpha_2}h_{j-1}$. Then $NF({\rm spoly}(f_6,h_l) \vert G)=0$
	     	   \item ${\rm spoly}(f_7,h_j)=h_{j+1}$ and hence $NF({\rm spoly}(f_7,h_j) \vert G)=0$
	     	    \item ${\rm spoly}(f_7,h_l)= X_1^{\alpha_1+2\alpha_{21}}h_{l-1}$ if $2\alpha_2+1<\alpha_1+2\alpha_{21}$. Otherwise leading monomials of $f_7$ and $h_l$ are relatively prime. As a result, in both cases, $NF({\rm spoly}(f_7,h_l) \vert G)=0$.
	\end{itemize}
	\begin{center}
		\underline{For $k=2$:}
	\end{center}
	In this case, since s might be greater than zero, $t$ will be greater than zero and $h_0$ will not be an element of the standard basis. Which means, from the above computations, only ${\rm spoly}(f_3,f_7)$ must be reconsidered. In addition to the normal forms considered in the case of $k=1$, we need the following for $k=2$ to prove the theorem:
	
\begin{itemize}
	\item  ${\rm spoly}(f_3,f_7)=X_1^{\alpha_1-\alpha_{21}-1}h_0$. The problem here is that, since $k=2,$ $s$ is not necessarily $0$ and we can not guarantee if $h_0 \in G$. Set ${\rm spoly}(f_3,f_7)$ as $r_1$. If $s>0$, then $l>0$ and by its definition, $3\alpha_{21}+\alpha_3<2\alpha_2+1$ and ${\rm LM}(r_1)=X_1^{\alpha_1+2\alpha_{21}}X_3^{\alpha_3-1}$ and $T_{r_1}=\{g_0\}$. ${\rm spoly}(r_1,g_0)=X_1^{\alpha_1-\alpha_{21}-1}X_2^{\alpha_2+1}f_2$. Hence, $NF({\rm spoly}(f_3,f_7) \vert G)=0$.
	\item ${\rm spoly}(g_i,g_j)= X_1^{(j-i)\alpha_1+2(j-i)\alpha_{21}}X_2^{(2i+1)\alpha_2+1}X_4-X_2^{(2j+1)\alpha_2+1}X_3^{j-i}X_4$ Set this as $r_1$. Then ${\rm LM}(r_1)= X_1^{(j-i)\alpha_1+2(j-i)\alpha_{21}}X_2^{(2i+1)\alpha_2+1}X_4$ and ${\rm spoly}(r_1,f_7)= X_1^{\alpha_1+2\alpha_{21}}X_2^{(2i+1)\alpha_2+1}X_4\left[ X_1^{(j-i-1)\alpha_1+2(j-i-1)\alpha_{21}}\right.$ $-\left.X_2^{2(j-i-1)\alpha_2}X_3^{j-i-1} \right]=r_2$ which implies that  $T_{r_2}=\{f_7\}$ and this, continuing inductively implies that  $r_{j-i}={\rm spoly}(r_{j-i-1},f_7)=$ $X_1^{(j-i-1)\alpha_1+2(j-i-1)\alpha_{21}}X_2^{[2i+1]\alpha_2+1}X_4f_7$  and hence $NF({\rm spoly}(g_i,g_j) \vert G)=0$ .
	
	\item   $NF({\rm spoly}(g_i,g_s) \vert G)=0$ as the leading monomials are relatively prime.   
	
	\item ${\rm spoly}(g_i,h_j)=X_1^{(j-i)\alpha_1+(2(j-i)+1)\alpha_{21}}X_2^{(2i+1)\alpha_2+1}X_4-X_2^{(2j+2)\alpha_2+1}X_3^{j-i}$. Set this as $r_1$. If  ${\rm LM}(r_1)=X_1^{(j-i)\alpha_1+(2(j-i)+1)\alpha_{21}}X_2^{(2i+1)\alpha_2+1}X_4$ then $T_{r_1}=\{ f_2\}$ and  ${\rm spoly}(r_1,f_2)=$\newline$X_2^{(2i+2)\alpha_2+1}\left[ X_1^{(j-i)\alpha_1+(2(j-i)+1)\alpha_{21}}X_4-X_2^{(2(j-i)+1)\alpha_2}X_3^{j-i}\right]=r_2 $.      ${\rm LM}(r_2)= X_2^{(2(j-i)+1)\alpha_2}X_3^{j-i} $ and $T_{r_2}=\{ f_7\}$.  ${\rm spoly}(r_2,f_7)=X_1^{\alpha_1+2\alpha_{21}}X_2^{2(i+1)\alpha_2+1}\left[ X_1^{(j-i-1)\alpha_1+(2(j-i-1))\alpha_{21}}-X_2^{2(j-i-1)\alpha_2}X_3^{j-i-1}\right]$\newline$=r_3 $. $T_{r_3}=\{f_7\}$ and continuing inductively, finally we obtain $r_{j-i+1}={\rm spoly}(r_{j-i},f_7)=$\newline$X_1^{(j-i-1)\alpha_{1}+2(j-i-1)\alpha_{21}}X_2^{2(i+1)\alpha_2+1}f_7$ and hence $NF({\rm spoly}(g_i,h_j) \vert G)=0$ in this case.\newline If ${\rm LM}(r_1)=X_2^{(2j+2)\alpha_2+1}X_3^{j-i}$, and  then $T_{r_1}=\{f_7\}$ and ${\rm spoly}(r_1,f_7)=$\newline $X_1^{\alpha_1+2\alpha_{21}}X_2^{(2i+1)\alpha_2+1}\left[ X_2^{(2(j-i-1)+1)\alpha_2}X_3^{j-i-1}-X_1^{(j-i-1)\alpha_{1}+(2(j-i-1)+1)\alpha_{21}}X_4 \right]=r_2 $. $T_{r_2}=\{f_7\}$ and ${\rm spoly}(r_2,f_7)= X_1^{2\alpha_1+4\alpha_{21}}X_2^{(2i+1)\alpha_2+1}\left[ X_2^{(2(j-i-2)+1)\alpha_2}X_3^{j-i-2}-X_1^{(j-i-2)\alpha_1+(2(j-i-2)+1)\alpha_{21}}X_4\right]=r_3 $. $T_{r_3}=\{f_7\}$ and continuing inductively, finally we obtain $r_{j-i+1}={\rm spoly}(f_7,r_{j-i})=$\newline$X_1^{(j-i)\alpha_1+2(j-i)\alpha_{21}}X_2^{(2i+1)\alpha_2+1}f_2$ and hence $NF({\rm spoly}(g_i,h_j) \vert G)=0$ in this case, too.

	\item  $NF({\rm spoly}(g_i,h_l) \vert G)=0$ as the leading monomials are relatively prime.    
	\item $NF({\rm spoly}(g_s,h_j) \vert G)=0$ as the leading monomials are relatively prime.    

	\item ${\rm spoly}(g_s,h_l)=X_1^{s\alpha_1+(2s+2)\alpha_{21}+1}X_2^{(2(l-s)+1)\alpha_2}X_3^{\alpha_3-(s+1)}-X_1^{l\alpha_1+(2l+3)\alpha_{21}+1}X_3^{\alpha_3-(l+1)}X_4=r_1$. If ${\rm LM}(r_1)=X_1^{s\alpha_1+(2s+2)\alpha_{21}+1}X_2^{(2(l-s)+1)\alpha_2}X_3^{\alpha_3-(s+1)}$, then $T_{r_1}=\{f_7\} $ and ${\rm spoly}(r_1,f_7)=$\newline $X_1^{(s+1)\alpha_1+(2s+4)\alpha_{21}+1}X_3^{\alpha_3-(l+1)}\left[ X_2^{(2(l-s-1)+1)\alpha_2}X_3^{l-s-1}-X_1^{(l-s-1)\alpha_1+(2(l-s-1)+1)\alpha_{21}}X_4\right]=r_2 $. $T_{r_2}=\{f_7\}$ and continuing inductively, $r_{l-s+1}={\rm spoly}(r_{l-s},f_7)=X_1^{l\alpha_1+2(l+1)\alpha_{21}+1}X_3^{\alpha_3-(l+1)}f_2$. 
If ${\rm LM}(r_1)=X_1^{l\alpha_1+(2l+3)\alpha_{21}+1}X_3^{\alpha_3-(l+1)}X_4$, then $T_{r_1}=\{f_2\}$ and \newline${\rm spoly}(r_1,f_2)=X_1^{s\alpha_1+(2s+2)\alpha_{21}+1}X_2^{\alpha_2}X_3^{\alpha_3-(l+1)}\left[ X_1^{(l-s)\alpha_1+2(l-s)\alpha_{21}}-X_2^{2(l-s)\alpha_2}X_3^{l-s}\right]=r_2 $. $T_{r_2}=\{f_7\}$ and continuing inductively, $T_{r_{l-s}}=\{f_7\}$ and $r_{l-s+1}={\rm spoly}(r_{l-s},f_7)=X_1^{(l-1)\alpha_1+2(l-1)\alpha_{21}+1}X_2^{\alpha_2}X_3^{\alpha_3-(l+1)}f_7$. Hence, in both cases, $NF({\rm spoly}(g_s,h_l) \vert G)=0$.

\item ${\rm spoly}(f_1,g_i)= X_2^{(2i+1)\alpha_2+1}X_4^{3}-X_1^{(i+1)\alpha_1+(2i+2)\alpha_{21}+1}X_3^{\alpha_3-(i+2)}=r_1$. ${\rm LM}(r_1)=X_2^{(2i+1)\alpha_2+1}X_4^{3}$ by $(7)$ and $(3)$ and $T_{r_1}=\{f_4,f_5\}$ but ${\rm ecart}(f_5)$ is minimal. Then ${\rm spoly}(r_1,f_5)=X_1^{\alpha_{21}+1}X_2^{(2i+1)\alpha_2}X_3^{\alpha_3-1}X_4^{}-X_1^{(i+1)\alpha_1+(2i+2)\alpha_{21}+1}X_3^{\alpha_3-(i+2)}=r_2$. ${\rm LM}(r_2)=X_1^{\alpha_{21}+1}X_2^{(2i+1)\alpha_2}X_3^{\alpha_3-1}X_4^{}$ and $T_{r_2}=\{f_2,f_6,f_7\}$ and ${\rm spoly}(r_2,f_2)=X_1X_3^{\alpha_{3}-(i+2)}\left[ X_2^{(2i+2)\alpha_2}X_3^{i+1}-X_1^{(i+1)\alpha_1+2(i+1)\alpha_{21}}\right] =r_3$. ${\rm LM }(r_3)=X_1X_2^{(2i+2)\alpha_2}X_3^{\alpha_3-1} $ and $T_{r_3}=\{f_7\}$, ${\rm spoly}(r_3,f_7)=X_1^{\alpha_1+2\alpha_{21}+1}X_3^{\alpha_{3}-(i+2)}\left[ X_2^{(2i)\alpha_2}X_3^{i}-X_1^{i\alpha_1+2i\alpha_{21}}\right]=r_4$. $T_{r_4}=\{f_7\}$ and continuing inductively, we obtain $r_{i+1}={\rm spoly}(r_{i+2},f_7)=X_1^{i\alpha_1+2i\alpha_{21}+1}X_3^{\alpha_3-(i+2)}f_7$ and hence, $NF({\rm spoly}(f_1,g_i) \vert G)=0$

\item ${\rm spoly}(f_2,g_i)= X_2^{(2i+1)\alpha_2+1}X_4^{2}-X_1^{i\alpha_1+(2i+1)\alpha_{21}+1}X_2^{\alpha_2}X_3^{\alpha_3-(i+1)} =r_1$. ${\rm LM}(r_1)=X_2^{(2i+1)\alpha_2+1}X_4^{2}$ by $(5)$ and $(7)$. $T_{r_1}=\{f_5\}$. ${\rm spoly}(r_1,f_5)=X_1^{\alpha_{21}+1}X_2^{\alpha_2}\left[ X_2^{2i\alpha_2}X_3^{i}-X_1^{i\alpha_1+2i\alpha_{21}}\right]=r_2 $. $T_{r_2}=\{f_7\}$.  ${\rm spoly}(r_2,f_7)=X_1^{\alpha_1+3\alpha_{21}+1}X_2^{\alpha_2}X_3^{\alpha_3-(i+1)}\left[ X_2^{2(i-1)\alpha_2}X_3^{i-1}-X_1^{(i-1)\alpha_1+2(i-1)\alpha_{21}}\right]=r_3 $. $T_{r_3}=\{f_7\}$ and continuing inductively, $r_{i+1}={\rm spoly}(r_i,f_7)=X_1^{(i-1)\alpha_1+(2i+1)\alpha_{21}+1}X_2^{\alpha_2}X_3^{\alpha_3-(i+1)}f_7$ and hence $NF({\rm spoly}(f_2,g_i) \vert G)=0$

\item ${\rm spoly}(f_3,g_i)=X_2^{(2i+1)\alpha_2+1}X_3^{i+1}X_4-X_1^{(i+1)\alpha_1+(2i+1)\alpha_{21}}X_2=r_1$.  Using $(7)$ and $(4)$, ${\rm LM}(r_1)=X_2^{(2i+1)\alpha_2+1}X_3^{i+1}X_4$ and $T_{r_1}=\{f_6,f_7\}$ but ${\rm ecart}(f_7)$ is minimal. Then \newline${\rm spoly}(r_1,f_7)=X_1^{\alpha_1+2\alpha_{21}}X_2\left[ X_2^{(2i-1)\alpha_2}X_3^{i}X_4^{}-X_1^{i\alpha_1+(2i-1)\alpha_{21}}\right]=r_2 $.  $T_{r_2}=\{f_6,f_7\}$. Continuing inductively, we obtain, $r_{i+1}={\rm spoly}(r_i,f_7)=X_1^{i\alpha_1+2i\alpha_{21}}X_2^{}f_6$ and hence $NF({\rm spoly}(f_3,g_i) \vert G)=0$


\item  $NF({\rm spoly}(f_4,g_i) \vert G)=0$ as the leading monomials are relatively prime.    
\item  $NF({\rm spoly}(f_5,g_i) \vert G)=0$ as the leading monomials are relatively prime. 
\item ${\rm spoly}(f_6,g_i)=X_2^{(2i+2)\alpha_2+1}X_4^{2}-X_1^{(i+1)\alpha_1+(2i+3)\alpha_{21}+1}X_3^{\alpha_3-(i+2)}=r_1$. ${\rm LM}(r_1)=X_2^{(2i+2)\alpha_2+1}X_4^{2}$ by $(5)$ and $(7)$. Then $T_{r_1}=\{f_5\}$ and ${\rm spoly}(r_1,f_5)=X_1^{\alpha_{21}+1}X_2^{(2i+2)\alpha_2}X_3^{\alpha_3-1}-X_1^{(i+1)\alpha_1+(2i+3)\alpha_{21}+1}X_3^{\alpha_3-(i+2)}=r_2$. ${\rm LM}(r_2)=X_1^{\alpha_{21}+1}X_2^{(2i+2)\alpha_2}X_3^{\alpha_3-1}$ and $T_{r_2}=\{f_7\}$.\newline ${\rm spoly}(r_2,f_7)=X_1^{\alpha_1+3\alpha_{21}+1}X_3^{\alpha_3-(i+2)}\left[ X_1^{i\alpha_1+2i\alpha_{21}}-X_2^{2i\alpha_2}X_3^{i}\right]=r_3 .$ $T_{r_3}=\{f_7\}$ and continuing inductively we obtain, $r_{i+2}={\rm spoly}(r_{i+1},f_7)=X_1^{i\alpha_1+(2i+1)\alpha_{21}+1}X_3^{\alpha_3-(i+2)}f_7$ and hence  $NF({\rm spoly}(f_6,g_i) \vert G)=0$.
\item ${\rm spoly}(f_7,g_i)=g_{i+1}$. Hence, $NF({\rm spoly}(f_7,g_i) \vert G)=0$   

\item ${\rm spoly}(f_1,g_s)=X_1^{\alpha_1}X_2^{(2s+1)\alpha_2+1}-X_1^{s\alpha_1+(2s+2)\alpha_{21}+1}X_3^{\alpha_3-s}X_4$. Set this as $r_1$. Since $s-1<l$, by the definition of $l$ and $(4)$, ${\rm LM}(r_1)= X_1^{s\alpha_1+(2s+2)\alpha_{21}+1}X_3^{\alpha_3-s}X_4$. $T_{r_1}=\{f_2\}$ and ${\rm spoly}(r_1,f_2)=X_1^{s\alpha_1+(2s+1)\alpha_{21}+1}X_2^{\alpha_2}X_3^{\alpha_3-s}-X_1^{\alpha_1}X_2^{(2s+1)\alpha_2+1}=r_2$. $T_{r_2}=\{ g_{s-1}\}$  and ${\rm spoly}(r_2,g_{s-1})=X_1^{\alpha_1}X_2^{2s\alpha_2+1}f_2$. Hence, $NF({\rm spoly}(f_1,g_s) \vert G)=0$
\item ${\rm spoly}(f_2,g_s)=h_s$. Since $h_s \in G,$  $NF({\rm spoly}(f_2,g_s) \vert G)=0$
\item $NF({\rm spoly}(f_3,g_s) \vert G)=0$ as the leading monomials are relatively prime. 
\item ${\rm spoly}(f_4,g_s)=  X_1X_2^{(2s+2)\alpha_2}X_3^{\alpha_3-1}-X_1^{s\alpha_1+(2s+2)\alpha_{21}+1}X_3^{\alpha_3-(s+1)}X_4^2$. Set this as $r_1$. If ${\rm LM}(r_1)=X_1X_2^{(2s+2)\alpha_2}X_3^{\alpha_3-1}$, then $T_{r_1}=\{f_7\}$ and\newline
${\rm spoly}(r_1,f_7)=X_1^{\alpha_1+2\alpha_{21}+1}X_3^{\alpha_3-(s+1)}$ $\left[ X_2^{(2s)\alpha_2}X_3^{s-1}-X_1^{(s-1)\alpha_1+(2s)\alpha_{21}}X_4^2 \right]=r_2 $. $T_{r_2}=\{f_2\}$ and ${\rm spoly}(r_2,f_2)=X_1^{}X_2^{}X_3^{}\left[ X_2^{(2s-1)\alpha_2}X_3^{s-1}-X_1^{(s-1)\alpha_1+(2s-1)\alpha_{21}}X_4\right] =r_3$. $T_{r_3}=\{f_2\}$ and ${\rm spoly}(r_3,f_2)=X_1^{\alpha_1+2\alpha_{21}+1}X_2^{2\alpha_2}X_3^{\alpha_3-(s+1)}\left[X_1^{(s-1)\alpha_1+(2s-2)\alpha_{21}}-X_2^{(2s-2)\alpha_2}X_3^{s-1} \right]=r_4 $. $T_{r_4}=\{f_7\}$ and continuing inductively, $r_{s+2}={\rm spoly}(r_s,f_7)=X_1^{s\alpha_1+2\alpha_{21}+1}X_2^{(s)\alpha_2}X_3^{\alpha_3-(s+1)}f_7$.
If ${\rm LM}(r_1)=X_1^{s\alpha_1+(2s+2)\alpha_{21}+1}X_3^{\alpha_3-(s+1)}X_4^2$, then $T_{r_1}=\{ f_1,f_2 \}$ but ${\rm ecart }(f_2)$ is minimal. \newline ${\rm spoly}(r_1,f_2)=X_1X_2^{\alpha_2}X_3^{\alpha_3-(s+1)}\left[ X_1^{s\alpha_1+(2s+1)\alpha_{21}}X_4-X_2^{(2s+1)\alpha_2}X_3^s \right] $. Set this as $r_2$. Observe that $X_2^{\alpha_2}{\rm spoly}(f_5,g_s)-X_1^{\alpha_{21}}r_2=0$ and $NF({\rm spoly}(f_5,g_s) \vert G)=0$ (see below).\newline
Hence, in both of the cases, $NF({\rm spoly}(f_4,g_s) \vert G)=0$

\item ${\rm spoly}(f_5,g_s)=X_1^{\alpha_{21}+1}X_2^{(2s+1)\alpha_2}X_3^{\alpha_3-1}-X_1^{s\alpha_1+(2s+2)\alpha_{21}+1}X_3^{\alpha_3-(s+1)}X_4$. Set this as $r_1$.
If ${\rm LM}(r_1)=X_1^{\alpha_{21}+1}X_2^{(2s+1)\alpha_2}X_3^{\alpha_3-1}$, then $T_{r_1}=\{f_7\}$ and\newline
${\rm spoly}(r_1,f_7)=X_1^{\alpha_1+3\alpha_{21}+1}X_3^{\alpha_3-(s+1)}$ $\left[ X_2^{(2s-1)\alpha_2}X_3^{s-1}-X_1^{(s-1)\alpha_1+(2s-1)\alpha_{21}}X_4 \right]=r_2 $. $T_{r_2}=\{f_2\}$ and ${\rm spoly}(r_2,f_2)=X_1^{\alpha_1+3\alpha_{21}+1}X_2^{\alpha_2}X_3^{\alpha_3-(s+1)}\left[X_1^{(s-1)\alpha_1+(2s-2)\alpha_{21}}-X_2^{(2s-2)\alpha_2}X_3^{s-1} \right]=r_3 $. $T_{r_3}=\{f_7\}$ and continuing inductively, $r_{s+1}={\rm spoly}(r_s,f_7)=X_1^{s\alpha_1+(2s-1)\alpha_{21}+1}X_2^{\alpha_2}X_3^{\alpha_3-(s+1)}f_7$.
If ${\rm LM}(r_1)=X_1^{s\alpha_1+(2s+2)\alpha_{21}+1}X_3^{\alpha_3-(s+1)}X_4$, then $T_{r_1}=\{ f_2\}$
${\rm spoly}(r_1,f_2)=X_1^{\alpha_{21}+1}X_2^{\alpha_2}X_3^{\alpha_3-(s+1)}$ $\left[ X_1^{s\alpha_1+2s\alpha_{21}}-X_2^{2s\alpha_2}X_3^{s} \right]=r_2 $. $T_{r_2}=\{f_7\}$ and\newline ${\rm spoly}(r_2,f_7)=X_1^{\alpha_1+3\alpha_{21}+1}X_2^{\alpha_2}X_3^{\alpha_3-(s+1)}\left[X_1^{(s-1)\alpha_1+(2s-2)\alpha_{21}}-X_2^{(2s-2)\alpha_2}X_3^{s-1} \right]=r_3 $. $T_{r_3}=\{f_7\}$ and continuing inductively, $r_{s+1}={\rm spoly}(r_s,f_7)=X_1^{s\alpha_1+(2s-1)\alpha_{21}+1}X_2^{\alpha_2}X_3^{\alpha_3-(s+1)}f_7$ in this case, too. Hence, in both of the cases $NF({\rm spoly}(f_5,g_s) \vert G)=0$

\item ${\rm spoly}(f_6,g_s)= X_1^{\alpha_1+\alpha_{21}}h_s$ and hence $NF({\rm spoly}(f_6,g_s) \vert G)=0$
\item ${\rm spoly}(f_7,g_s)=X_1^{\alpha_1+2\alpha_{21}} g_{s-1}$ and hence $NF({\rm spoly}(f_7,g_s) \vert G)=0$    
\end{itemize}	

	Since all normal forms reduce to zero, $G$ is a standard basis  for $I_C$ 
\end{proof}
\begin{corollary}
	$\{{f_1}^*,{f_2}^*,...,{f_7}^*, {g_0}^*,...,{g_{s}}^*, {h_s}^*,...,{h_{l}}^*\}$ is a standard basis  for $I_C^*$  where  ${f_1}^*=X_3X_4^2$, ${f_2}^*=X_1^{\alpha_{21}}X_4$,  ${f_3}^*=X_3^{\alpha_3}$, ${f_4}^*=X_4^{3}$, ${f_5}^*=X_2X_4^2$, ${f_6}^*=X_2^{\alpha_2}X_3X_4$, $f_{7}^*=X_2^{2\alpha_2}X_3$ and ${g_{i}}^*=X_1^{i\alpha_1+(2i+2)\alpha_{21}+1}X_3^{\alpha_3-(i+1)}$ for $i=1,2,...,s-1$, ${g_{s}}^*=X_2^{(2s+1)\alpha_{2}+1}X_4$, ${h_{j}}^*=X_1^{j\alpha_1+(2j+3)\alpha_{21}+1}X_3^{\alpha_3-(j+1)}$ for $j=s,s+1,...,l-1$, ${h_{l}}^*= X_2^{(2l+2)\alpha_{2}+1}$ . Since $X_1 | {f_2}^*$, the tangent cone is not Cohen-Macaulay by the criterion given in \cite{AMS} .
\end{corollary}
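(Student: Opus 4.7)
The plan is to deduce the corollary directly from Theorem \ref{mainthm} via the general principle (see \cite{greuel-pfister}) that if $G$ is a standard basis for an ideal with respect to a local degree ordering, then the set $\{g^* : g \in G\}$ of initial (lowest-degree homogeneous) forms is a standard basis for $I_S^*$. Granting this, the work reduces to computing each $g^*$ explicitly and checking it matches the list in the statement.

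The computation for $f_1,\dots,f_7$ is a direct comparison of the degrees of the two monomial summands of each binomial, using the structural inequalities already gathered: $\deg(X_1^{\alpha_1})>\deg(X_3X_4^2)$ by (1) gives $f_1^*=-X_3X_4^2$; the initial forms $f_2^*$, $f_3^*$, $f_4^*$ follow from (4), (2), (3) respectively; $f_5^*$ from (5); $f_6^*$ from (6); and $f_7^*$ from (7). For each $g_i$ with $i<s$ and each $h_j$ with $j<l$, the minimality of $s$ and $l$ forces the ``pure $X_1,X_3$'' summand to have strictly smaller degree, so it is the initial form; for $g_s$ and $h_l$ the defining inequality flips the comparison and the $X_2$-summand becomes the initial form.

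The only subtlety I expect is ruling out equality in these degree comparisons, since a tied degree would make some $g^*$ a binomial rather than the claimed monomial. For $i<s$ (respectively $j<l$), an equality at index $i$ (respectively $j$) would already satisfy a strict version of the defining inequality for $s$ (respectively $l$) at that smaller index, contradicting the minimality in the definition; for $g_s$ and $h_l$ strict inequality is built directly into the definitions of $s$ and $l$. Once these cases are handled, the initial forms line up exactly with the list in the corollary.

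Finally, for the non-Cohen-Macaulay assertion, it suffices to observe that the standard basis of $I_S^*$ just produced contains $f_2^*=X_1^{\alpha_{21}}X_4$, and since $\alpha_{21}\geq 1$ the variable $X_1$ (the one corresponding to the multiplicity $n_1$) divides this element. The criterion of \cite{AMS} then immediately rules out Cohen-Macaulayness of $gr_\mathfrak{m}(R_S)$.
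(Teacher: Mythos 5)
Your proposal is correct and matches the paper's (implicit) argument: the corollary is deduced from Theorem \ref{mainthm} via the Greuel--Pfister fact that the initial forms of a standard basis with respect to a local degree ordering generate $I_S^*$ (and monomial generators of a monomial ideal are automatically a standard basis), with each $f_i^*$, $g_i^*$, $h_j^*$ read off from inequalities $(1)$--$(7)$ and the minimality of $s$ and $l$, exactly as in the leading-monomial computations at the start of the theorem's proof. One small refinement: inequality $(6)$ alone only gives $\alpha_1+\alpha_{21}\geq\alpha_2+2$, so to rule out a degree tie in $f_6$ (which would make $f_6^*$ a binomial rather than the claimed monomial) you should combine $(7)$ with $(4)$ to obtain the strict inequality $\alpha_1+\alpha_{21}>\alpha_2+2$.
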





\section{Hilbert Function}\label{3}

	Let $P(I_S^*)$ denote the numerator of the Hilbert series of $A/{I^*_S}$
\begin{theorem}\label{hilser}
	The numerator of the Hilbert series of the local ring $R_S$ is 
	
	$P(I_S^*)=1-3t^3+3t^4-t^5-t^{\alpha_{21}+1}(1-t)(1+t-2t^2+t^3)-t^{\alpha_3}(1-t^{\alpha_{21}+1}-t^2(1-t^{\alpha_{21}}))-t^{\alpha_{2}+2}(1-t)(1-t^{\alpha_{21}})(1-t^{\alpha_3-1})-t^{2\alpha_2+1}(1-t)(1-t^{\alpha_3-1})-(1-t)^2(1-t^{2\alpha_2})R(t)-(1-t)^2(1-t^{\alpha_{21}})t^{(2s+1)\alpha_2+2}-(1-t)^2t^{(2l+2)\alpha_2+1}$ where  $R(t)=t^{2\alpha_{21}+\alpha_3}\sum_{j=0}^{s-1} t^{j(\alpha_1+2\alpha_{21}-1)}+t^{s\alpha_1+(2s+3)\alpha_{21}+\alpha_3-s}\sum_{j=0}^{l-s-1} t^{j(\alpha_1+2\alpha_{21}-1)}$ for any $s>0$ and $R(t)=0$ if $s=0$ and $l=0$ .
\end{theorem}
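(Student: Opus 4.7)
The plan is to read $P(I_S^*)$ off directly from the standard basis constructed in Theorem~\ref{mainthm}. By the corollary preceding this statement, $I_S^*$ equals the monomial ideal $J\subseteq A$ generated by the initial forms $\{f_i^*\}_{i=1}^{7}\cup\{g_i^*\}_{i=0}^{s}\cup\{h_j^*\}_{j=s}^{l}$, and so $P(I_S^*)$ is the numerator of $HS_{A/J}$ when written with denominator $(1-t)^4$.

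I would compute that numerator by adjoining the generators in eight blocks chosen to match the eight summands of the claimed formula, using at each step the recursion $P(J'+(m))=P(J')-t^{\deg m}\,P(J':m)$ coming from the short exact sequence
\[
0\longrightarrow (A/(J':m))(-\deg m)\longrightarrow A/J'\longrightarrow A/(J'+(m))\longrightarrow 0.
\]
The degree-three block $\{f_1^*,f_4^*,f_5^*\}$ has an exact Taylor complex and contributes $1-3t^3+3t^4-t^5$; adjoining $f_2^*$, $f_3^*$, $f_6^*$, $f_7^*$ in turn produces colon ideals $\langle X_3X_4,X_4^2,X_2X_4\rangle$, $\langle X_4^2,X_1^{\alpha_{21}}X_4\rangle$, $\langle X_1^{\alpha_{21}},X_3^{\alpha_3-1},X_4\rangle$ and $\langle X_4,X_3^{\alpha_3-1}\rangle$, whose numerators factor as stated in the four middle terms. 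Finally, the $X_2$-pure tails $g_s^*$ and $h_l^*$ have colon ideals $\langle X_1^{\alpha_{21}},X_3,X_4\rangle$ and $\langle X_3,X_4\rangle$, giving the last two summands $-(1-t)^2(1-t^{\alpha_{21}})t^{(2s+1)\alpha_2+2}$ and $-(1-t)^2 t^{(2l+2)\alpha_2+1}$.

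The main obstacle is the intermediate chain $g_0^*,\dots,g_{s-1}^*,h_s^*,\dots,h_{l-1}^*$, each of the form $X_1^a X_3^b$ with $a$ strictly increasing and $b$ strictly decreasing along the chain. I would prove a single lemma asserting that, once all earlier generators of $J$ have been adjoined, one has $(J':m)=\langle X_3,X_4,X_2^{2\alpha_2}\rangle$ for every chain generator $m$ in this block: $X_3$ is contributed by $f_3^*$ at the first step and by the preceding chain element thereafter, $X_4$ always comes from $f_2^*$, and $X_2^{2\alpha_2}$ from $f_7^*$, while the inequalities (4)--(7) together with $\alpha_2>\alpha_{21}+1$ rule out every other potential generator of the colon. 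Because the three stabilising generators have pairwise disjoint supports, the colon numerator is the complete-intersection product $(1-t)^2(1-t^{2\alpha_2})$, so summing $-t^{\deg m}(1-t)^2(1-t^{2\alpha_2})$ over the chain yields $-(1-t)^2(1-t^{2\alpha_2})R(t)$; the split of $R(t)$ into two geometric subsums with common ratio $t^{\alpha_1+2\alpha_{21}-1}$ reflects the uniform degree step $\deg g_{i+1}^*-\deg g_i^*=\deg h_{j+1}^*-\deg h_j^*=\alpha_1+2\alpha_{21}-1$ within each segment, while the larger gap $\alpha_1+3\alpha_{21}-1$ between $g_{s-1}^*$ and $h_s^*$ produces the shifted starting exponent $t^{s\alpha_1+(2s+3)\alpha_{21}+\alpha_3-s}$ of the second subsum. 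Adding the eight block contributions yields the claimed formula, and the real work is the stabilisation lemma: verifying that, for each new chain generator $m$, the quotient of every $f_i^*$ and every earlier $g_i^*,h_j^*$ by its $\gcd$ with $m$ lies in $\langle X_3,X_4,X_2^{2\alpha_2}\rangle$.
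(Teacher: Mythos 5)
Your proposal is correct and follows essentially the same route as the paper: both compute $P(I_S^*)$ by iterating the Bayer--Stillman colon recursion $P(J+\langle w\rangle)=P(J)-t^{\deg w}P(J:w)$ over the monomial generators of $I_S^*$, in an order that differs only trivially from the one chosen in the paper's proof. Your writeup in fact supplies the colon-ideal computations (all of which check out, including the stabilised colon $\langle X_3,X_4,X_2^{2\alpha_2}\rangle$ along the $X_1^{a}X_3^{b}$ chain and the degree steps producing $R(t)$) that the paper leaves implicit.
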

\begin{proof}
	To compute the Hilbert series, we use Algorithm 2.6 of \cite{bayer} that is formed by continuous use of the proposition 
	
	"If $I$ is a monomial ideal with $I=<J,w>$, then the numerator of the Hilbert series of $A/I$ is $P(I)=P(J)-t^{\deg w}P(J:w)$ and $P(w)=1-t^{\deg w}$ where $w$ is a monomial and $\deg w$ is the total degree of $w$."

Taking $w_1={h_l}^*$, $w_2={h_s}^*$, $w_3={h_{l-1}}^*$,  $w_4={h_{l-2}}^*, \cdots, w_{l-s+2}={h_{s}}^*, w_{l-s+3}={g_{s}}^*$,..., $w_{l+2}={g_0}^*$, $w_{l+3}={f_7}^*$, $w_{l+4}={f_6}^*$, $w_{l+5}={f_3}^*$, $w_{l+6}={f_2}^*$, $w_{l+7}={f_4}^*$, $w_{l+8}={f_5}^*$, $w_{l+9}={f_1}^*$, If we set $J_0=I_S^*$, $J_{i+1}=J_{i}-\{ w_{i+1}\}$ for $i=0,\cdots,t+8$ in the Algorithm, we get  $P(J_i)=P(J_{i+1})-t^{\deg w_{i+1}}P(J_i:w_{i+1})$ and we obtain the desired result.
\end{proof}

\begin{corollary}
The second Hilbert series of the local ring is $Q(t)=(1+t+t^2+\cdots +t^{\alpha_{21}-1})(t+2t^2+t^4+t^5+\cdots+t^{\alpha_3})+(1+t+t^2+\cdots+t^{\alpha_3-2})(1+t+t^2+\cdots+t^{2\alpha_2}-t^{\alpha_2+2}(1+t+\cdots+t^{\alpha_{21}-1}))-(1+t+\cdots+t^{\alpha_{21}-1})t^{(2s+1)\alpha_2+2}+t^{\alpha_3-1}(1+t+\cdots+t^{(2l+2)\alpha_2-\alpha_3+1})-(1+t+\cdots+t^{2\alpha_{2}-1})R(t)$
\end{corollary}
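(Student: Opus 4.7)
The plan is to exploit the Hilbert--Serre relation $HS_{R_S}(t) = P(I_S^*)/(1-t)^k = Q(t)/(1-t)^d$ stated in the preliminaries, where $k = 4$ is the number of polynomial ring variables and $d = 1$ is the Krull dimension of the one-dimensional local ring $R_S$. This immediately gives $Q(t) = P(I_S^*)/(1-t)^3$. The proof therefore amounts to dividing the closed-form expression for $P(I_S^*)$ from Theorem \ref{hilser} by $(1-t)^3$ and rewriting the quotient using the truncated geometric sum identity $(1-t^n)/(1-t) = 1 + t + \cdots + t^{n-1}$.

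The strategy is to group the eight summands of $P(I_S^*)$ so that each group carries at least three factors of $(1-t)$. Three of the summands already carry $(1-t)^2$ explicitly and need only one further $(1-t)$ harvested from an adjacent factor of the form $1 - t^n$: namely $-(1-t)^2(1-t^{2\alpha_2})R(t)$, $-(1-t)^2(1-t^{\alpha_{21}})t^{(2s+1)\alpha_2+2}$, and $-(1-t)^2 t^{(2l+2)\alpha_2+1}$. These produce Parts C, D, and E of the stated formula after the appropriate reindexing. The $(1-t)$-weighted blocks with exponents $\alpha_{21}+1$, $\alpha_2+2$, and $2\alpha_2+1$ each carry one explicit $(1-t)$ together with further factors of the form $1 - t^n$ that, upon expansion, supply the missing two powers of $(1-t)$ and assemble into Part B.

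The delicate piece is the combination of $1 - 3t^3 + 3t^4 - t^5$ with $-t^{\alpha_3}\bigl(1 - t^{\alpha_{21}+1} - t^2(1 - t^{\alpha_{21}})\bigr)$ and $-t^{\alpha_{21}+1}(1-t)(1+t-2t^2+t^3)$, since none of the first two displays an obvious $(1-t)$ factor. I would first verify divisibility of this combined block by $(1-t)^3$ by evaluating the block and its first two derivatives at $t = 1$; they must vanish, because $P(I_S^*)/(1-t)^3$ is globally a polynomial. Carrying out the division then yields the irregular factor $(t + 2t^2 + t^4 + t^5 + \cdots + t^{\alpha_3})$ multiplying $(1 + t + \cdots + t^{\alpha_{21}-1})$ that appears as Part A of the stated $Q(t)$; the coefficient $2$ on $t^2$ and the missing $t^3$ are precisely the algebraic fingerprint of the $-3t^3 + 3t^4$ input after division by $(1-t)^3$.

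The main obstacle is identifying the correct grouping so that this unusual factor emerges cleanly, rather than being distributed opaquely across several of the pieces. Once the grouping is fixed and the block divisions are executed, the remaining manipulations are mechanical, and the final identity may be confirmed at the generating-function level by multiplying the claimed $Q(t)$ by $(1-t)^3$ and matching coefficients against the closed form for $P(I_S^*)$ provided by Theorem \ref{hilser}.
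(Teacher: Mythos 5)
Your proposal is correct and is essentially the paper's own argument: the paper proves this corollary in one line by invoking the Hilbert--Serre relation $Q(t)=P(I_S^*)/(1-t)^3$ and calling the result a direct consequence of Theorem \ref{hilser}. Your elaboration of how to group the summands of $P(I_S^*)$ so that each block is divisible by $(1-t)^3$, together with the final check by multiplying the claimed $Q(t)$ back by $(1-t)^3$, simply fills in the mechanical division that the paper leaves implicit.
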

\begin{proof}
Since $P(I_S^*)=\displaystyle\frac{Q(t)}{(1-t)^3}$, the result is a direct consequence of theorem \ref{hilser}.
\end{proof}
Clearly, since the krull dimension is one, if there are no negative terms in the second Hilbert Series, then the Hilbert function will be non-decreasing. We can state and prove the next theorem.
\begin{theorem}
	The local ring $R_S$ has a non-decreasing Hilbert function if $l=0$.
\end{theorem}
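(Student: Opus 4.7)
The plan is to reduce the theorem to showing that every coefficient of the second Hilbert series $Q(t)$ from the Corollary is non-negative. Since the Krull dimension equals $1$, we have $HS_{R_S}(t)=Q(t)/(1-t)$, so $H_{R_S}(n+1)-H_{R_S}(n)=[t^{n+1}]Q(t)$, and coefficient-wise non-negativity of $Q(t)$ is exactly the non-decreasing condition that the paper already points out in the remark preceding the theorem.

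First, I would use the preceding remarks: $l=0$ together with $s\leq l$ forces $s=0$, and then $R(t)=0$ by its definition. Substituting $l=s=0$ and $R(t)=0$ into the formula of the Corollary collapses the last subtracted term and specializes the remaining exponents, reducing $Q(t)$ to an expression built from non-negative blocks together with one manifestly negative piece $-t^{\alpha_2+2}(1+t+\cdots+t^{\alpha_{21}-1})$. Indeed, condition $(4)$ (namely $\alpha_2>\alpha_{21}+1$) lets us rewrite the inner bracket as
$(1+t+\cdots+t^{2\alpha_2})-t^{\alpha_2+2}(1+t+\cdots+t^{\alpha_{21}-1})=(1+t+\cdots+t^{\alpha_2+1})+t^{\alpha_2+\alpha_{21}+2}(1+t+\cdots+t^{\alpha_2-\alpha_{21}-2})$,
which is coefficient-wise non-negative.

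The heart of the proof is then to show that the non-negative blocks dominate the single negative piece $-t^{\alpha_2+2}(1+t+\cdots+t^{\alpha_{21}-1})$, which contributes $-1$ at each degree in the window $W=[\alpha_2+2,\alpha_2+\alpha_{21}+1]$. Two positive blocks are the candidates: the convolution $(1+t+\cdots+t^{\alpha_3-2})(1+t+\cdots+t^{\alpha_2+1})$, whose coefficient at $t^{\alpha_2+j}$ equals $\min(\alpha_3-j,\alpha_2+2)$ and is $\geq 1$ whenever $1\leq j\leq \alpha_3-1$; and the geometric block $t^{\alpha_3-1}(1+t+\cdots+t^{2\alpha_2-\alpha_3+1})$, which contributes $+1$ at every exponent in $[\alpha_3-1,2\alpha_2+1]$, hence at $t^{\alpha_2+j}$ for every $j\geq \alpha_3-\alpha_2-1$. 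Since $\alpha_3-\alpha_2-1\leq \alpha_3-1$, the two ranges overlap, and together they cover every $j\in[2,\alpha_{21}+1]$ (using $(4)$ once more to ensure $2\alpha_2+1\geq \alpha_2+\alpha_{21}+1$); so the $-1$ is offset at every degree of $W$, and $Q(t)$ is coefficient-wise non-negative.

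The main obstacle I anticipate is purely the bookkeeping: verifying that the combined coverage of the two positive blocks really does include every degree in $W$, and double-checking the convolution coefficient formula at the boundary values $j=2$ and $j=\alpha_{21}+1$. This is elementary and depends only on condition $(4)$, so I do not expect the quantitative form of the hypothesis $l=0$ (i.e.\ $3\alpha_{21}+\alpha_3\geq 2\alpha_2+2$) to play any role beyond forcing $R(t)=0$ and the correct specialization of exponents inside the Corollary.
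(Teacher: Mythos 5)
Your proposal is correct and follows essentially the same route as the paper: both arguments reduce to coefficient-wise non-negativity of $Q(t)$, force $s=0$ and $R(t)=0$ from $l=0$, and absorb the piece $-t^{\alpha_2+2}(1+\cdots+t^{\alpha_{21}-1})$ inside the bracket into $(1+\cdots+t^{2\alpha_2})$ via condition $(4)$ exactly as you do. The only differences are cosmetic: where the paper splits into the cases $\alpha_3\geq\alpha_2+3$ and $\alpha_3<\alpha_2+3$ and cancels the remaining $-t^{\alpha_2+2}(1+\cdots+t^{\alpha_{21}-1})$ against a single positive block in each case, you cover the window $[\alpha_2+2,\alpha_2+\alpha_{21}+1]$ by the union of two blocks at once (note only that the top exponent of $t^{\alpha_3-1}(1+\cdots+t^{2\alpha_2-\alpha_3+1})$ is $2\alpha_2$, not $2\alpha_2+1$ --- a harmless slip, since $(4)$ still gives $2\alpha_2\geq\alpha_2+\alpha_{21}+2$).
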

\begin{proof}
Since $\alpha_2>\alpha_{21}+1$ by $(4)$, $2\alpha_2>\alpha_2+\alpha_{21}+1$ and hence, 

\noindent $Q(t)=(1+t+t^2+\cdots +t^{\alpha_{21}-1})(t+2t^2+t^4+t^5+\cdots+t^{\alpha_3})+(1+t+t^2+\cdots+t^{\alpha_3-2})(1+t+t^2+\cdots+t^{\alpha_2+1}+t^{\alpha_2+\alpha_{21}+2}+\cdots+t^{2\alpha_{2}})-(1+t+\cdots+t^{\alpha_{21}-1})t^{(2s+1)\alpha_2+2}+t^{\alpha_3-1}(1+t+\cdots+t^{(2l+2)\alpha_2-\alpha_3+1})-(1+t+\cdots+t^{2\alpha_{2}-1})R(t)$

\noindent When $l=0$, then $s=0$ and $R(t)=0$. There are two cases:

\noindent If $\alpha_3\geq \alpha_2+3$:

\noindent $Q(t)=(1+t+t^2+\cdots+t^{\alpha_{21}-1})\left[ t+2t^2+t^4+\cdots+t^{\alpha_2+1}+t^{\alpha_2+3}+\cdots+t^{\alpha_3}\right] +(1+t+\cdots+t^{\alpha_3-2})\left[ 1+t+\cdots+t^{\alpha_2+1} \right. $\newline $\left.+t^{\alpha_2+\alpha_{21}+2}+\cdots+t^{2\alpha_2}\right]+t^{\alpha_3-1}\left[ 1+t+\cdots+t^{2\alpha_2-\alpha_3+1}\right]$

\noindent If $\alpha_3 < \alpha_2+3$:

\noindent $Q(t)=(1+t+t^2+\cdots+t^{\alpha_{21}-1})\left[ t+2t^2+t^4+\cdots+t^{\alpha_3}\right] +(1+t+\cdots+t^{\alpha_3-2})\left[ 1+t+\cdots+t^{\alpha_2+1}+t^{\alpha_2+\alpha_{21}+2}+\cdots+ \right. $\newline $\left.t^{2\alpha_2}\right]+\left[t^{\alpha_3-1}+\cdots+t^{\alpha_2+1}+t^{\alpha_2+\alpha_{21}+2}+\cdots+t^{2\alpha_2}\right]$

In both cases, there are no negative terms in the second Hilbert series hence the Hilbert Function is nondecreasing.

\noindent When $l>0$, since $l \geq s$, and $\alpha_2>\alpha_{21}+1$, $(2l+2)\alpha_2>(2s+1)\alpha_2+\alpha_{21}+1$ and $\alpha_3<(2s+1)\alpha_2+3$ which means that all of the negative terms $-(1+t+\cdots+t^{\alpha_{21}-1})t^{(2s+1)\alpha_2+2}$ in $Q(t)$ will be cancelled out by the terms in $t^{\alpha_3-1}(1+t+\cdots+t^{(2l+2)\alpha_2-\alpha_3+1})$.

\noindent Then it is enough to show that the negative terms $-(1+t+\cdots+t^{2\alpha_{2}-1})R(t)$ will also be cancelled out. Note that,

$(1+t+\cdots+t^{2\alpha_{2}-1})R(t)=(1+t+\cdots+t^{2\alpha_{2}-1})(t^{2\alpha_{21}+3}+\text{ higher degree terms }+ t^{(l-1)\alpha_1+(2l+1)\alpha_{21}+\alpha_3-l+1}$$=t^{2\alpha_{21}+3}+\text{ some higher degree terms}+t^{(l-1)\alpha_1+(2l+1)\alpha_{21}+\alpha_3-l+2\alpha_2}.$
	
\noindent Since $l$ is the smallest integer with $l\alpha_1+(2l+3)\alpha_{21}+\alpha_3\geq (2l+2)\alpha_2+l+1$, for $l-1$ we have, $(l-1)\alpha_1+(2l+1)\alpha_{21}+\alpha_3<2l\alpha_{21}+l\implies (l-1)\alpha_1+(2l+1)\alpha_{21}+\alpha_3-l+2\alpha_2<(2l+2)\alpha_2$.

Also, since $2\alpha_2-1<\alpha_1+2\alpha_{21}-1$, all of the terms in $(1+t+\cdots+t^{2\alpha_{2}-1})R(t)$ has coefficient 1.

Hence, all of the negative terms disappear in $Q(t)$ and the Hilbert Function is nondecreasing.
\end{proof}

\section{Examples}\label{4}

\begin{example}Let $\alpha_{21}=12 , \alpha_1=38 , \alpha_2=20 ,\alpha_3=8, \alpha_{4}=3 $. Then $k=1$, $s=0$ and $l=0$ and the corresponding standard basis is $\{ f_1=X_1^{38}-X_3X_4^2,
	f_2=X_2^{20}-X_1^{12}X_4,
	f_3=X_3^8-X_1^{25}X_2,
	f_4=X_4^3-X_1X_2^{19}X_3^7,
	f_5=X_1^{13}X_3^{7}-X_2X_4^{2},
	f_6=X_1^{50}-X_2^{20}X_3X_4,
	f_7=X_1^{62}-X_2^{40}X_3,
	g_0=X_2^{21}X_4-X_1^{25}X_3^{7}, 
	h_0=X_2^{41}-X_1^{37}X_3^{7} \}$. The fisrt Hilbert series is $P(I_S^*)=1-3t^3+3t^4-t^5-t^8+t^{10}-t^{13}+3t^{15}-3t^{16}+t^{17}+t^{21}-3t^{22}+3t^{23}-t^{24}+t^{29}-t^{30}+2t^{34}-3t^{35}+t^{36}-3t^{41}+4t^{42}-t^{43}+t^{48}-t^{49}$ and the second Hilbert series is
	$Q(t)=1+3t^{}+6t^{2}+7t^{3}+9t^{4}+11t^{5}+13t^{6}+15t^{7}+16t^{8}+16t^{9}+16t^{10}+16t^{11}+16t^{12}+15t^{13}+13t^{14}+13t^{15}+12t^{16}+11t^{17}+10t^{18}+9t^{19}+8t^{20}+8t^{21}+6t^{22}+5t^{23}+4t^{24}+3t^{25}+2t^{26}+t^{27}+2t^{34}+3t^{35}+4t^{36}+5t^{37}+6t^{38}+7t^{39}+8t^{40}+6t^{41}+5t^{42}+4t^{43}+3t^{44}+2t^{45}+t^{46}$. Since there are no negative terms, Hilbert function is nondecreasing.
\end{example}

\begin{example}For $\alpha_{21}=11$, $\alpha_{1}=62$,$\alpha_{2}=40$, $\alpha_{3}=14$, $\alpha_{4}=3$, we have $k=2$, $l=12$ and $s=3$. Corresponding standard basis is:

 $\{ f_1=X_1^{62}-X_3X_4^2,
	f_2=X_2^{40}-X_1^{11}X_4,
	f_3=X_3^{14}-X_1^{50}X_2,
	f_4=X_4^3-X_1X_2^{39}X_3^{13},
	f_5=X_1^{12}X_3^{13}-X_2X_4^{2},
	f_6=X_1^{73}-X_2^{40}X_3X_4,
	f_7=X_1^{84}-X_2^{80}X_3,
	g_0=X_2^{41}X_4-X_1^{23}X_3^{13}, 
	g_1=X_2^{121}X_4-X_1^{107}X_3^{12},
	g_2=X_2^{201}X_4-X_1^{191}X_3^{11},
	g_3=X_2^{281}X_4-X_1^{275}X_3^{10},
	h_4=X_2^{321}-X_1^{286}X_3^{10},
	h_5=X_2^{401}-X_1^{370}X_3^{9},
	h_6=X_2^{481}-X_1^{454}X_3^{8},
	h_7=X_2^{561}-X_1^{538}X_3^{7},
	h_8=X_2^{641}-X_1^{622}X_3^{6},
	h_9=X_2^{721}-X_1^{706}X_3^{5},
	h_{10}=X_2^{801}-X_1^{790}X_3^{4},
	h_{11}=X_2^{881}-X_1^{874}X_3^{3},
	h_{12}=X_2^{961}-X_1^{958}X_3^{2},
	h_{13}=X_2^{1041}-X_1^{1042}X_3, \}$. 
The first Hilbert series is $P(I_S^*)=1-3t^3+3t^4-t^5-t^{12}+2t^{14}-3t^{15}+2t^{16}+t^{26}-t^{27}-t^{36}+2t^{37}-t^{38}-t^{42}+t^{43}+t^{53}-t^{54}+t^{55}-t^{56}-t^{66}+t^{67}-t^{81}+t^{82}+t^{94}-t^{95}+t^{116}-2t^{117}+t^{118}-t^{119}+2t^{120}-t^{121}+t^{199}-2t^{200}+t^{201}-t^{202}+2t^{203}-t^{204}+2t^{293}-2t^{294}+t^{295}-t^{296}+2t^{297}-t^{298}+t^{376}-2t^{377}+t^{378}-t^{379}+2t^{380}-t^{381}+t^{459}-2t^{460}+t^{461}-t^{462}+2t^{463}-t^{464}+t^{542}-2t^{543}+t^{544}-t^{545}+2t^{546}-t^{547}+t^{625}-2t^{626}+t^{627}-t^{628}+2t^{629}-t^{630}+t^{708}-2t^{709}+t^{710}-t^{711}+2t^{712}-t^{713}+t^{791}-t^{792}+t^{793}-t^{794}+2t^{795}-t^{796}+t^{824}-2t^{875}+t^{876}-t^{877}+2t^{878}-t^{879}+t^{957}-2t^{958}+t^{959}-t^{960}+2t^{961}-t^{962}+t^{1040}-3t^{1041}+3t^{1042}-t^{1043}$ and the second Hilbert series is
	$Q(t)=1+3t^{}+6t^{2}+7t^{3}+9t^{4}+11t^{5}+13t^{6}+15t^{7}+17t^{8}+19t^{9}+21t^{10}+23t^{11}+24t^{12}+24t^{13}+25t^{14}+24t^{15}+23t^{16}+22t^{17}+21t^{18}+20t^{19}+19t^{20}+18t^{21}+17t^{22}+16t^{23}+15t^{24}+14t^{25}+14t^{26}+14t^{27}+14t^{28}+14t^{29}+14t^{30}+14t^{31}+14t^{32}+14t^{33}+14t^{34}+14t^{35}+13t^{36}+13t^{37}+13t^{38}+13t^{39}+13t^{40}+13t^{41}+12t^{42}+11t^{43}+10t^{44}+9t^{45}+8t^{46}+7t^{47}+6t^{48}+5t^{49}+4t^{50}+3t^{51}+2t^{52}+2t^{53}+2t^{54}+3t^{55}+4t^{56}+5t^{57}+6t^{58}+7t^{59}+8t^{60}+9t^{61}+10t^{62}+11t^{63}+12t^{64}+13t^{65}+13t^{66}+13t^{67}+13t^{68}+13t^{69}+13t^{70}+13t^{71}+13t^{72}+13t^{73}+13t^{74}+13t^{75}+13t^{76}+13t^{77}+13t^{78}+13t^{79}+13t^{80}+12t^{81}+11t^{82}+10t^{83}+9t^{84}+8t^{85}+7t^{86}+6t^{87}+5t^{88}+4t^{89}+3t^{90}+2t^{91}+t^{92}+t^{116}+t^{117}+t^{118}+t^{199}+t^{200}+t^{201}+t^{293}+t^{294}+t^{295}+t^{376}+t^{377}+t^{378}+t^{459}+t^{460}+t^{461}+t^{542}+t^{543}+t^{544}+t^{625}+t^{626}+t^{627}+t^{708}+t^{709}+t^{710}+t^{791}+t^{792}+t^{793}+t^{874}+t^{875}+t^{876}+t^{957}+t^{958}+t^{959}+t^{1040}$. Since there are no negative terms, Hilbert function is nondecreasing.

\end{example}

\end{document}